\pgfplotsset{compat= newest, width=10cm}
\theoremstyle{plain}
\newtheorem{lemma}{Lemma}[section]
\newtheorem{theorem}{Theorem}[section]
\newtheorem{proposition}{Proposition}[section]
\newtheorem*{corollary}{Corollary}
\theoremstyle{remark}
\theoremstyle{definition}
\newtheorem{definition}{Definition}[section]
\newtheorem{example}{Example}[section]
\renewcommand{\maketitle}{
	\begin{center}
		\rule[1em]{\textwidth}{0mm}
		\baselineskip=0.30in
		{\bfseries \@title} \par
		\vspace{5mm}
		\baselineskip=0.2in
		{\bfseries \@author}\par
		\vspace{1mm}
		{\it \@address} \par
		{\small\tt \@email} \par
		\vspace{3mm}
		
	\end{center}
	\vspace{3mm}
}
\newcommand{\address}[1]{\def\@address{#1}}
\newcommand{\email}[1]{\def\@email{#1}}
\title{On the Clean Graph of a Ring}
\author{Randhir Singh$^{a,*}$, S. C. Patekar$^b$}
\address{$^a$Center for Advanced Studies in Mathematics, Department of Mathematics, Savitribai Phule Pune University, Pune-411007, India\\
	$^b$Center for Advanced Studies in Mathematics, Department of Mathematics, Savitribai Phule Pune University, Pune-411007, India\\
}
\email{$^a$randhir\textunderscore singh46@yahoo.com, $^b$shri82patekar@gmail.com}
\date{\today}
\begin{document}
	\maketitle
	\vskip 2mm	
	\begin{abstract}
		Let $R$ be a ring (not necessarily commutative ring) with identity. The clean graph $Cl(R)$ of a ring $R$ is a graph with vertices in the form of ordered pair $(e,u)$, where $e$ is an idempotent of the ring $R$ and $u$ is a unit of the ring $R$. Two distinct vertices $(e,u)$ and $(f,v)$ are adjacent if and only if $ef=fe=0$ or $uv=vu=1$. In this paper, we determine the Wiener index, Matching number of the clean graph of the ring $\mathbb{Z}_n$.
		
		\vskip 1mm
		\noindent {\bf Keywords:} Idempotent, Unit, Clean Graph, Wiener index, Matching Number.\vskip 1mm

		\noindent {\bf 2020 AMS Subject Classification:} 05C70,	05C09, 13A70, 16U60, 16U40.
	\end{abstract}
	\section{Introduction}
	Studying the graphs associated with the algebraic structure to understand various properties is an interesting and classical technique. In this way, various graph parameters can be studied using algebraic properties for better understanding. For example, one can study graphs associated with ring $R$ with unity.
	
	The concept of the zero divisor graph of a commutative ring was introduced by Beck\cite{IB}. He studied the problem of coloring of the graph of a commutative ring with unity. Anderson et al. \cite{DP} modified the definition of the zero divisor graph of a commutative ring with unity as follows: let $R$ be a commutative ring with unity and zero divisor graph $\Gamma (R)$ be a simple undirected graph. Vertex set $V(\Gamma(R))$ consist of non-zero zero divisors of the commutative ring $R $ with unity and two distinct vertices $x$ and $y$ are adjacent if and only if $xy=0$. Many researchers are attracted to study the graphs associated with algebraic structure.
	
	\section{Preliminary}
	Following are some notions and definitions used throughout the paper. \\If $R$ is any ring and $ A\subset R$ is any subset of $R$, then $A^*=A\backslash\{0\}$. For the ring $R$ (not necessarily commutative), $Id(R)$ denotes the set of idempotents, $U(R)$ denotes the set of units. Set of units in $R$ which are self invertible is denoted by $U'(R)$. Set of units in $R$ which are not self invertible is $U''(R)=U(R)\backslash U'(R)$. For a graph $ G $, $V(G)$ denotes the vertex set of graph $G$ and $E(G)$ denotes the edge set of graph $G$. If vertices $u$ and $v$ are adjacent, we denote it by $u \sim v$.
	
	
	\begin{definition}
		A matching in a graph $G$ is a set of non-loop edges with no shared endpoints. The vertices incident to the edges of matching $M$ are saturated by $M$. A perfect matching in a graph is a matching that saturates every vertex. A matching number, denoted by $\mu(G),$ is the maximum size of a matching in $G.$ Graph contains a perfect matching, if $\mu(G)=\frac{|V|}{2}.$
	\end{definition}
	
	\begin{definition} 
		\textbf{\cite{WK}} An element of a ring is said to be clean if it can be written as the sum of a unit and an idempotent. If all the elements of the ring are clean then that ring is said to be a clean ring.
	\end{definition}  
	
	Clean graph $Cl(R)$ of a commutative ring $R$ is defined by Habibi et al.\cite{Habibi} as follows:
	
	\begin{definition}\textbf{\cite{Habibi}}\label{Clean Graph}
		The clean graph of a ring $R$ denoted by $Cl(R)$ is undirected graph with vertex set $V(Cl(R))=\{(e,u):e\in Id(R) \text{ and } u\in U(R) \}$. Two distinct vertices $(e,u)$ and $(f,v)$ are adjacent if and only if $ef=fe=0$ or $uv=vu=1$. The subgraph induced by set $\{(e,u): e(\neq0)\in Id(R) \text{ and } u\in U(R)\}$ is denoted by $Cl_2(R)$. 
	\end{definition}

	\begin{definition}
		Let $ x,y \in V(G) $. The distance between $x$ and $y$ in $ G $, denoted by $d(x,y)$ is the number of edges on the shortest path between $x$ and $y$. If there is no path between $ x $ and $ y $ then $ d(x,y)=\infty $ .
	\end{definition}
	
	\begin{definition}
		The Wiener index is a distance-based topological index introduced by H. Wiener\cite{HW} and defined to be sum of all distances between all pairs of vertices of graph $G$. Hosoya\cite{Hosoya} gave the mathematical representation for the Wiener index of graph $G$, defined by
		
		\begin{center}
			$W(G)=\frac{1}{2}\sum\limits_{v\in V(G)}d(u|G)$.
		\end{center}
		where, $d(u|G)$ denotes the distance of a vertex $u$ in graph $G$. 
	\end{definition}
	
	
	\begin{definition}
		Let $R$ be a ring with unity and an element $a\in R$ is said to be an idempotent if $a^2=a$. $ 0 $ and $ 1 $ are called as trivial idempotents. Moreover, if $a$ is an idempotent then $1-a$ is also an idempotent.
	\end{definition}
	
	\begin{definition}
		Euler's phi (or totient) function of a positive integer $ n $, denoted by $\phi(n)$ is the set of all positive integers less than $ n $ which are relatively prime to $ n $. 
		
	\end{definition}
	
	If $n$ is a positive integer with prime factorization, $n = \prod_{i=1}^{k}{p_i}^{\alpha_i}$, then $\phi(n)=\prod_{i=1}^{k}({p_i}^{\alpha_i}-{p_i}^{\alpha_i-1})$.
	Hence the set of units in $\mathbb{Z}_n$ will be $U(\mathbb{Z}_n)=\{u_1=1,u_2,u_3,...u_r,u_{r+1},u_{r+2},...,u_{\phi(n)-1},u_{\phi(n)}\}$. 
	
	\begin{proposition}\textbf{\cite{Hewitt}} \label{Prop 1}
		There are $2^k$ idempotents in $\mathbb{Z}_n$ and $2^k-1$ non-zero idempotents are there in $\mathbb{Z}_n$, where $k$ is the number of distinct primes dividing $n$.

	\end{proposition}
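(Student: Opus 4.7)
The plan is to invoke the Chinese Remainder Theorem and reduce the problem to counting idempotents in each prime-power component. Writing $n = \prod_{i=1}^{k} p_i^{\alpha_i}$ with the $p_i$ distinct primes, I would first record the ring isomorphism
\[
\mathbb{Z}_n \;\cong\; \mathbb{Z}_{p_1^{\alpha_1}} \times \mathbb{Z}_{p_2^{\alpha_2}} \times \cdots \times \mathbb{Z}_{p_k^{\alpha_k}}.
\]
Because multiplication in a direct product is coordinatewise, an element $(e_1,\ldots,e_k)$ is idempotent if and only if each coordinate $e_i$ is idempotent in $\mathbb{Z}_{p_i^{\alpha_i}}$. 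Hence $|Id(\mathbb{Z}_n)| = \prod_{i=1}^{k}|Id(\mathbb{Z}_{p_i^{\alpha_i}})|$.

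The next step is to show that each factor $\mathbb{Z}_{p^{\alpha}}$ has exactly two idempotents, namely the trivial ones $0$ and $1$. If $e^2 \equiv e \pmod{p^{\alpha}}$, then $p^{\alpha} \mid e(e-1)$. Since consecutive integers are coprime, $\gcd(e,e-1)=1$, so the full prime power $p^{\alpha}$ must divide one of the two factors. This forces $e \equiv 0$ or $e \equiv 1 \pmod{p^{\alpha}}$, giving $|Id(\mathbb{Z}_{p^{\alpha}})| = 2$.

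Putting the two steps together yields $|Id(\mathbb{Z}_n)| = 2^k$, and after excluding the zero idempotent $(0,0,\ldots,0)$ we obtain exactly $2^k - 1$ non-zero idempotents, as claimed.

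There is no genuine obstacle here, since the argument is a standard application of CRT together with the elementary coprimality observation for $e$ and $e-1$. The only point that deserves mild care is the bijection between idempotents of $\mathbb{Z}_n$ and tuples of idempotents of the factor rings, but this is immediate from the coordinatewise description of the isomorphism, so no further work is required.
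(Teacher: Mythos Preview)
Your argument is correct and is the standard proof: CRT reduces to the local case, and in $\mathbb{Z}_{p^{\alpha}}$ the relation $p^{\alpha}\mid e(e-1)$ together with $\gcd(e,e-1)=1$ forces $e\in\{0,1\}$. Note, however, that the paper does not supply its own proof of this proposition; it is quoted as a known result from Hewitt and Zuckerman \cite{Hewitt}, so there is no in-paper argument to compare against.
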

	
	$Id({\mathbb{Z}_n})^*=\{ \mathfrak{e}_1,\mathfrak{e}_2,\mathfrak{e}_3,,...,\mathfrak{e}_{2^k-2},\mathfrak{e}_{2^k-1} \},$ where $\mathfrak{e}_1= 1$ and $\mathfrak{e}_{2m+1} = 1 - \mathfrak{e}_{2m}$ for $m \in \{ 1,2, ...,\frac{2^{k}-4}{2} , \frac{2^{k}-2}{2} \}$
	
	\begin{theorem}\label{T1}\cite{Habibi}
		$Cl_2(R)$ is connected if and only if $R$ has a non trivial idempotents. Moreover, if $Cl_2(R)$ is connected then $diam(Cl_2(R))\leq 3$.
	\end{theorem}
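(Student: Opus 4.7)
The plan is to prove the two directions separately. For the forward direction I would work by contrapositive: assume $Id(R) = \{0,1\}$, i.e.\ $R$ has only the trivial idempotents. Then by Definition~\ref{Clean Graph} every vertex of $Cl_2(R)$ has the form $(1,u)$ with $u\in U(R)$, and two such vertices $(1,u)$ and $(1,v)$ can satisfy the idempotent adjacency condition $1\cdot 1 = 0$ in no ring with identity, so they are adjacent only when $uv=vu=1$. Hence the neighbourhood of $(1,u)$ is at most the singleton $\{(1,u^{-1})\}$, and $Cl_2(R)$ decomposes into isolated vertices (for self-invertible $u$) and isolated edges (for $u\neq u^{-1}$), which is disconnected as soon as $|U(R)|\ge 2$.

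For the reverse direction, fix a non-trivial idempotent $e\in Id(R)\setminus\{0,1\}$; then $1-e$ is also a non-trivial idempotent with $e(1-e)=(1-e)e=0$. Given arbitrary vertices $A=(e_1,u_1)$ and $B=(e_2,u_2)$ of $Cl_2(R)$, the plan is to exhibit an explicit walk of length at most $3$, handling three cases according to whether the idempotent coordinates equal $1$.

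\textbf{Case 1 ($e_1\neq 1$ and $e_2\neq 1$):} the vertex $C=(1-e_1,\,u_2^{-1})$ is legitimate since $1-e_1\neq 0$, and $C\sim A$ via $e_1(1-e_1)=0$, while $C\sim B$ via $u_2^{-1}u_2=1$, giving distance at most $2$.

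\textbf{Case 2 ($e_1=1$, $e_2\neq 1$, or the symmetric case):} take $C=(1-e_2,\,u_1^{-1})$; then $A=(1,u_1)\sim C$ via $u_1u_1^{-1}=1$ and $C\sim B$ via $e_2(1-e_2)=0$, again distance at most $2$.

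\textbf{Case 3 ($e_1=e_2=1$):} here we invoke the auxiliary idempotent $e$ to bridge, obtaining the walk
\[
A=(1,u_1)\;\sim\;(e,\,u_1^{-1})\;\sim\;(1-e,\,u_2^{-1})\;\sim\;(1,u_2)=B,
\]
where the first and third edges come from the unit condition and the middle edge from $e(1-e)=0$, giving distance at most $3$.

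The main obstacle is Case 3: vertices with first coordinate $1$ are adjacent only through the unit condition, so one cannot connect two such vertices directly, and one needs the non-trivial idempotent $e$ precisely to supply the middle edge $(e,u_1^{-1})\sim(1-e,u_2^{-1})$. This is what forces the diameter to be $3$ rather than $2$ in general and explains why the equivalence in the theorem hinges exactly on the existence of a non-trivial idempotent. A small amount of bookkeeping is needed to ensure that the chosen intermediate vertices are genuinely distinct from $A$ and $B$, but in every collision the endpoints turn out to be adjacent directly, so the diameter bound is unaffected.
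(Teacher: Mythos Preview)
Your argument is correct, and in fact the paper does not prove this theorem at all: it is quoted from \cite{Habibi} as a preliminary result, so there is no ``paper's own proof'' to compare against. That said, the explicit paths you construct are precisely the ones the paper itself invokes in its later arguments. Your Case~3 walk
\[
(1,u_1)\sim(e,u_1^{-1})\sim(1-e,u_2^{-1})\sim(1,u_2)
\]
is exactly the path used in the proof of Proposition~\ref{Prop 2} (Case~2) to realise diameter~$3$, and your Case~1/Case~2 bridge $(e,u)\sim(1-f,u^{-1})\sim(f,v)$ is the one appearing in Lemma~\ref{Lemma 1} (Cases~2--3). So although the paper outsources the theorem, your proof is fully aligned with the techniques the paper relies on downstream.

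One small point worth making explicit: your contrapositive for the forward direction ends with ``disconnected as soon as $|U(R)|\ge 2$'', which is the right hedge, since for $|U(R)|=1$ (e.g.\ $R=\mathbb{Z}_2$) the graph $Cl_2(R)$ is a single vertex and hence trivially connected despite the absence of non-trivial idempotents. This is a boundary case in the theorem statement itself rather than a flaw in your argument; the cleanest fix is to observe that $(1,1)$ is always isolated in the absence of non-trivial idempotents, so the graph is disconnected whenever it has at least two vertices.
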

	
	In this paper, we are considering $Cl_2(\mathbb{Z}_n)$.
	
	\section{Main Result}
	\begin{proposition}\label{Prop 3} For any non negative integer $m$, $n=2^m\prod\limits_{i=1}^{k}{p_i}^{\alpha_i}$, $p_i\neq2$ and $p_i$'s are distinct primes, in ring $\mathbb{Z}_n$, \begin{center}
			$|U'(\mathbb{Z}_n)|=\begin{cases}
				2^k & \text{ if }m=0 \text{ or }m=1 \\
				2^{k+1} & \text{ if } m=2\\
				2^{k+2} & \text{ if } m\geq 3.
			\end{cases}$
		\end{center}
		Note that, for our convenience we write $|U'(\mathbb{Z}_n)|=r$. The value of $r$ changes accordingly.
		
	\end{proposition}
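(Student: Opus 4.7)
The plan is to identify $U'(\mathbb{Z}_n)$ with the set of solutions of $x^2\equiv 1\pmod{n}$, since a unit coincides with its own inverse precisely when it squares to the identity. The count is then a standard Chinese Remainder Theorem computation, which I outline below.

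First I would decompose via the Chinese Remainder Theorem:
$$\mathbb{Z}_n \cong \mathbb{Z}_{2^m}\times\prod_{i=1}^{k}\mathbb{Z}_{p_i^{\alpha_i}},$$
with the $\mathbb{Z}_{2^m}$ factor understood to be absent when $m=0$. An element is self-invertible iff each of its coordinates is, so the number of such elements is multiplicative across the factors. For each odd prime power $p_i^{\alpha_i}$, the unit group $(\mathbb{Z}/p_i^{\alpha_i})^{\ast}$ is cyclic of even order, hence contains exactly two square roots of $1$, namely $\pm 1$. These $k$ factors together contribute $2^k$.

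Next I would handle the $2$-adic factor case by case. For $m=0$ there is no such factor, and for $m=1$ the group $(\mathbb{Z}/2)^{\ast}=\{1\}$ is trivial, so both subcases contribute no extra factor. For $m=2$ the group $(\mathbb{Z}/4)^{\ast}=\{1,3\}$ consists entirely of involutions, contributing an extra factor of $2$. For $m\geq 3$ the equation $x^2\equiv 1\pmod{2^m}$ factors as $(x-1)(x+1)\equiv 0\pmod{2^m}$; since $x$ must be odd, $x\pm 1$ are consecutive even integers with exactly one divisible by $4$, which forces $x\equiv\pm 1,\ \pm 1+2^{m-1}\pmod{2^m}$, giving four roots and hence an extra factor of $4$. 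Multiplying each of these contributions by the $2^k$ coming from the odd part yields the three stated values.

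The argument is short once the CRT reduction is made; the only step requiring real care is the $m\geq 3$ case, where $(\mathbb{Z}/2^m)^{\ast}$ fails to be cyclic and so one must either verify directly (as above) that $x^2\equiv 1\pmod{2^m}$ has exactly four solutions, or invoke the structure theorem $(\mathbb{Z}/2^m)^{\ast}\cong\mathbb{Z}_2\times\mathbb{Z}_{2^{m-2}}$ and count its elements of order dividing~$2$. Either route is routine, but it is easy to slip by tacitly assuming cyclicity of $(\mathbb{Z}/2^m)^{\ast}$, which would wrongly collapse the $m\geq 3$ case onto the $m=2$ case.
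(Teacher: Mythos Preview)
Your proposal is correct and follows essentially the same route as the paper: identify $U'(\mathbb{Z}_n)$ with the solution set of $x^2\equiv 1\pmod n$, split via the Chinese Remainder Theorem, and count two roots modulo each odd prime power while handling the factor $2^m$ separately according to $m\in\{0,1\}$, $m=2$, or $m\ge 3$. Your treatment of the $m\ge 3$ case is in fact a bit tidier than the paper's---you give the direct $(x-1)(x+1)$ argument and correctly state the structure as $(\mathbb{Z}/2^m)^{\ast}\cong\mathbb{Z}_2\times\mathbb{Z}_{2^{m-2}}$, whereas the paper writes the second factor as order $2^{m-1}$ (a slip, though its final count of four roots is still right).
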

	\begin{proof}
		To find number of self invertible element $|U'(\mathbb{Z}_n)|$ in $ \mathbb{Z}_n $, we find the solution for $x^2\equiv1(mod ~n)$, where $n=2^m\prod\limits_{i=1}^{k}{p_i}^{\alpha_i}$, $p_i\neq2.$\\
		$x^2\equiv1(mod ~n)$ implies
		$x^2\equiv1(mod ~2^m\prod\limits_{i=1}^{k}{p_i}^{\alpha_i}).$\\
		Therefore, 
		
		\begin{equation*}\label{eq1}
			x^2\equiv1(mod~2^m) \text{ and }  x^2\equiv1(mod \prod\limits_{i=1}^{k}{p_i}^{\alpha_i})                
		\end{equation*}
		The solutions of $x^2\equiv1(mod~2^m)$ are as follows:
		\begin{enumerate}
			\item  $x\equiv1(mod~2)$, for $m=1$.
			\item $x\equiv1(mod~4)$ and $x\equiv3(mod~4)$, for $m=2$.
			
			\item 	The solution set of equation $x^2\equiv1(mod~2^m)$, for $m\geq3$, is multiplicative group $U_{2^m}$ of units of $\mathbb{Z}_{2^m}$. By Theorem $20$ \cite{Jacobson}, $U_{2^m}$ is direct product of a cyclic group of order $2$ and one of order $2^{m-1}$. $U_{2^m}\cong \mathbb{Z}_{2}\times\mathbb{Z}_{2^{m-1}}$ and there are $4$ elements of order $2$.
			
		\end{enumerate}
		The solutions of $x^2\equiv1(mod ~{p_i}^{\alpha_i})$ are $x\equiv1(mod ~{p_i}^{\alpha_i})$ and $x\equiv p-1(mod ~{p_i}^{\alpha_i})$.
		Therefore, by Chinese remainder theorem,
		
		\begin{center}
			$|U'(\mathbb{Z}_n)|=\begin{cases}
				2^k & \text{ if }m=0 \text{ or }m=1 \\
				2^{k+1} & m=2\\
				2^{k+2} & m\geq 3.
			\end{cases}$
		\end{center}
		
	\end{proof}
	\begin{proposition}\label{Prop 2}
		
		Let $n=\prod_{i=1}^{k}{p_i}^{\alpha_i}$, $\alpha_i \in \mathbb{N}$, for $i=1,2,...,k$. The following statements hold true:
		\begin{enumerate}
			\item $diam(Cl_2(\mathbb{Z}_n))=\infty$ if $k=1$.
			\item $diam(Cl_2(\mathbb{Z}_n))=3$ if $k\geq2$.
		\end{enumerate}
		
	\end{proposition}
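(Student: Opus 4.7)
The proposition splits into two cases according to $k$, and the plan for each reduces to a short adjacency calculation with carefully chosen vertices.

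For the first statement, suppose $k=1$, so $n=p^{\alpha}$. Proposition~\ref{Prop 1} yields $2^k-1=1$ non-zero idempotent, namely $1$. Hence every vertex of $Cl_2(\mathbb{Z}_n)$ has the form $(1,u)$ with $u\in U(\mathbb{Z}_n)$, and the adjacency condition for distinct vertices $(1,u),(1,v)$ collapses to $uv=1$ since $1\cdot 1\neq 0$. In particular $(1,1)$ has no admissible neighbour (no unit $v\neq 1$ satisfies $1\cdot v=1$), so as soon as $|U(\mathbb{Z}_n)|\geq 2$, i.e.\ $n>2$, the vertex $(1,1)$ is isolated and $Cl_2(\mathbb{Z}_n)$ is disconnected; thus $diam(Cl_2(\mathbb{Z}_n))=\infty$.

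For the second statement, assume $k\geq 2$. Proposition~\ref{Prop 1} then supplies a non-trivial idempotent $e$, and Theorem~\ref{T1} gives connectedness together with $diam(Cl_2(\mathbb{Z}_n))\leq 3$. I would establish the lower bound by exhibiting two vertices at distance exactly $3$. The candidate pair is $(1,1)$ and $(1,-1)$, distinct because $n\geq 6$. Non-adjacency follows from $1\cdot 1\neq 0$ and $1\cdot(-1)\neq 1$. For any putative common neighbour $(g,w)$, the non-vanishing $g\cdot 1=g\neq 0$ blocks the idempotent clause on both sides, so one must have $1\cdot w=1$ from $(1,1)$ and $w\cdot(-1)=1$ from $(1,-1)$; together these force $w=1$ and $w=-1$, which is impossible. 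Hence $d((1,1),(1,-1))\geq 3$, and combined with Theorem~\ref{T1} this gives equality. A concrete realising path of length three is $(1,1)\sim(e,1)\sim(1-e,-1)\sim(1,-1)$, using $1\cdot 1=1$ for the first edge, $e(1-e)=0$ for the middle, and $(-1)(-1)=1$ for the last.

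The only genuinely creative step is choosing the witness pair; once $(1,1)$ and $(1,-1)$ are fixed, everything reduces to the two observations that the idempotent coordinates are identically $1$ (so $ef\neq 0$ whenever one tries to use the idempotent clause) and that $\pm 1$ are distinct self-inverses (so no single unit $w$ can invert both). This uniform behaviour is what makes the common-neighbour obstruction collapse without any case split on $n$ or on the fine structure of $U(\mathbb{Z}_n)$.
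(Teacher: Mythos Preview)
Your proof is correct and follows essentially the same approach as the paper: for $k=1$ observe that $(1,1)$ is isolated, and for $k\geq 2$ combine the upper bound from Theorem~\ref{T1} with a witness pair $(1,u),(1,v)$ satisfying $uv\neq 1$. Your version is in fact tighter, since you explicitly rule out a common neighbour of $(1,1)$ and $(1,-1)$ to certify $d\geq 3$, whereas the paper only exhibits a length-$3$ path at this point and implicitly relies on the subsequent Lemma~\ref{Lemma 1} for the lower bound.
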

	\begin{proof}
		Let $n=\prod\limits_{i=1}^{k}{p_i}^{\alpha_i}$, $\alpha_i\in \mathbb{N}$ for $i=1,2,...,k$ and $\mathbb{Z}_n=\{\overline{0},\overline{1},\overline{2},...,\overline{n-1}\}.$ Let $v_i=(e,u)$ and $v_j=(f,v) $ be the distinct vertices of $ Cl_2(\mathbb{Z}_n) $. To get $diam(Cl_2(\mathbb{Z}_n))$, we make the following cases:
		\paragraph{Case 1.}
		$k=1$: In this case, $n={p_1}^{\alpha_1}$ and $ \mathbb{Z}_{{p_1}^{\alpha_1}} $ is local ring. It is clear that there are only trivial idempotents $ 0 $ and $ 1 $ in $ \mathbb{Z}_{{p_1}^{\alpha_1}} $. We observe that $ (1,1)$ is an isolated vertex of $ Cl_2(\mathbb{Z}_{{p_1}^{\alpha_1}}) .$ Therefore, $diam(Cl_2(\mathbb{Z}_n))=\infty$.
		
		\paragraph{Case 2.}
		$k\geq 2$: In this case, $n=\prod\limits_{i=1}^{k}{p_i}^{\alpha_i}$ and $ \mathbb{Z}_n=\{\overline{0}, \overline{1}, \overline{2},...,\overline{n-1}\}.$  So, $V(G)=\{(e,u):e\in Id(\mathbb{Z}_n)^*, u \in U(\mathbb{Z}_n)\}.$ By Theorem \ref{T1}, $diam(Cl_2(R))\leq 3.$
		
		Claim: $diam(Cl_2(\mathbb{Z}_n))=3.$
		
		Consider the vertices $(1,u)$ and $(1,v)$, $uv\neq1$, then by Definition\ref{Clean Graph}, $(1,u)\sim (1-g,u^{-1})\sim (g,v^{-1})\sim(1,v).$ Thus, $diam(Cl_2(\mathbb{Z}_n))=3.$

	\end{proof}

	\begin{lemma}\label{Lemma 1}
		If $Cl_2(\mathbb{Z}_n)$ is connected graph
		then for $v_i=(e,u),v_j=(f,v) \in V(Cl_2(\mathbb{Z}_n))$, we have
		
		\begin{center}
			$d(v_i,v_j)=\begin{cases}
				1  &  \text{ if } ef=0  \text{ or }  uv=1 , u \neq v, e \neq f\\
				2  &  \text{ if }  e=f\neq 1 ,  uv\neq1 \\
				2  & \text{ if }   ef \neq 0  \text{ and }  uv \neq 1 ,  u\neq v ,  e\neq f \\	
				3  & \text{ if }  e=f=1 ,  u\neq v ,  uv\neq 1 .
			\end{cases}$
		\end{center} 
	\end{lemma}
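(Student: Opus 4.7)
The plan is to verify each of the four cases separately, combining the adjacency criterion of Definition \ref{Clean Graph} (to obtain lower bounds on $d$) with either explicit constructions of short paths or the bound $\mathrm{diam}(Cl_2(\mathbb{Z}_n))\leq 3$ from Proposition \ref{Prop 2} (to obtain upper bounds). Throughout, I will rely on the observation that for any nonzero idempotent $e$, its complement $1-e$ is again a nonzero idempotent and satisfies $1-e\neq e$; the last inequality holds because $e=1-e$ forces $2e=1$, and multiplying by $e$ together with $e^2=e$ then forces $e=0$.

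Cases 1 and 2 are essentially immediate. Case 1 is a direct reading of the adjacency definition. For Case 2, I would first note $d\geq 2$ since $ef=e^2=e\neq 0$ and $uv\neq 1$, and then exhibit the common neighbor $(1-e,1)$: adjacency to both $(e,u)$ and $(e,v)$ follows from $(1-e)e=0$, and distinctness from both endpoints is secured by $1-e\neq e$.

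Case 3 is the most delicate step and is where I expect the bookkeeping to be trickiest. The lower bound $d\geq 2$ is again direct, but the construction of a length-two path requires a split on whether $e=1$. If $e\neq 1$, I will take $(1-e,v^{-1})$ as a common neighbor, using $(1-e)e=0$ and $v^{-1}v=1$; the only subtle distinctness check is $1-e\neq f$, which follows from the hypothesis $ef\neq 0$ (since otherwise $ef=e(1-e)=0$). If $e=1$, then the hypothesis $e\neq f$ forces $f\neq 1$, and the symmetric vertex $(1-f,u^{-1})$ works by the same argument with the roles of the coordinates swapped.

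For Case 4, adjacency is ruled out as in Case 2, so $d\geq 2$. The core step is to show that no common neighbor exists: if $(g,w)$ were adjacent to both $(1,u)$ and $(1,v)$, then $g\cdot 1=g$ is a nonzero idempotent, so the idempotent adjacency condition fails, forcing $wu=1$ and $wv=1$ simultaneously and hence $u=v$, contradicting the hypothesis. This gives $d\geq 3$, and combined with the diameter bound from Proposition \ref{Prop 2} we conclude $d=3$; an explicit length-three path of the form $(1,u)\sim(1-g,u^{-1})\sim(g,v^{-1})\sim(1,v)$, as constructed in the proof of Proposition \ref{Prop 2}, can be recorded for concreteness. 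The main obstacle is thus the subcase analysis in Case 3; Cases 1, 2, and 4 follow almost mechanically from the adjacency definition and the prior diameter result.
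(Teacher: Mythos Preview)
Your proof is correct and follows essentially the same case-by-case strategy as the paper, building short paths via the complementary idempotent $1-e$ and inverse units. Your treatment is in fact more careful than the paper's in two spots: in Case~3 you split on whether $e=1$ (the paper's single intermediate vertex $(1-f,u^{-1})$ tacitly needs $f\neq 1$ so that $1-f\in Id(\mathbb{Z}_n)^*$), and in Case~4 you explicitly argue that no common neighbor can exist, whereas the paper only exhibits the length-$3$ path without verifying $d\geq 3$.
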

	\begin{proof} Let $v_i=(e,u)$, $ v_j=(f,v) \in  V(Cl_2(\mathbb{Z}_n))$ be the distinct vertices, where $e,f\in Id(\mathbb{Z}_n)^*$ and $u,v \in U(\mathbb{Z}_n)$. To get the the distance $d(v_i,v_j)$ between $v_i$ and $v_j$, we make the following cases:
		\paragraph{Case 1}$ ef=0 $ or $ uv=1 $ , $ u \neq v$, $e \neq f$: By Definition \ref{Clean Graph}, vertices $v_i\sim v_j$ and hence $d(v_i,v_j)=1$. 

		\paragraph{Case 2} $e=f\neq  1$ and $uv\neq 1$: In this case, by Definition \ref{Clean Graph}, $(e,u)\nsim (e,v)$. Therefore, $d(v_i,v_j)\geq1.$ So,  $ (e,u)\sim(1-e,u)\sim(e,v) $ hence $d(v_i,v_j)=2$.  
		
		\paragraph{Case 3} $ef \neq 0$ and $uv \neq 1$, $u\neq v$, $e\neq f$: In this case, by Definition \ref{Clean Graph}, vertices $v_i \nsim v_j$. So, $1< d(v_i,v_j)\leq3.$ It is clear that, $(e,u)\sim(1-f,u^{-1})\sim(f,v)$ and $d(v_i,v_j)=2$

		\paragraph{Case 4} $e=f=1$, $u\neq v$, $uv\neq 1$: By Definition \ref{Clean Graph}, $(1,u)\nsim(1,v)$. It is clear that, $(1,u)\sim (1-g,u^{-1})\sim(g,v^{-1})\sim(1,v)$ and hence $d(v_i,v_j)=3$.   		
	\end{proof}
	
	\begin{theorem}\label{thm 1}
		Let $n=\prod_{i=1}^{k}{p_i}^{\alpha_i}$, $\alpha_i \in \mathbb{N}$ for $i=1,2,...,k$ and $p_i$'s be distinct primes. The Wiener index of $Cl_2(\mathbb{Z}_n)$ is defined by
		\begin{enumerate}
			\item $W(Cl_2(\mathbb{Z}_n))=\infty$ if $k=1$.
			\item $W(Cl_2(\mathbb{Z}_n))=\frac{1}{2}\left[\phi(n)^2\left(2(2^{2k})-5(2^k)+5\right)-\phi(n)\left(2^{2k}-2^k+3\right)+2^kr\right]$ if $k\geq2$.
		\end{enumerate}
	\end{theorem}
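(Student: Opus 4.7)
For part (1), Proposition~\ref{Prop 2} shows that when $k = 1$ the vertex $(1,1)$ is isolated in $Cl_2(\mathbb{Z}_n)$, so the graph is disconnected and $W(Cl_2(\mathbb{Z}_n)) = \infty$.

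For part (2), my plan is to use Lemma~\ref{Lemma 1} to classify every unordered pair of distinct vertices by distance and write $W = P_1 + 2P_2 + 3P_3$, where $P_i$ denotes the number of pairs at distance $i$. By Proposition~\ref{Prop 2} the diameter is $3$, so $P_1 + P_2 + P_3 = \binom{N}{2}$ for $N = (2^k - 1)\phi(n)$, and eliminating $P_2$ gives
\[
W(Cl_2(\mathbb{Z}_n)) = N(N-1) - P_1 + P_3.
\]
This reduces the task to computing $P_1$ (the number of edges) and $P_3$. For $P_3$, Case~4 of Lemma~\ref{Lemma 1} identifies distance-$3$ pairs as exactly $\{(1,u),(1,v)\}$ with $u \neq v$ and $uv \neq 1$; subtracting the $(\phi(n) - r)/2$ self-inverse pairs $\{u, u^{-1}\}$ (with $u \notin U'(\mathbb{Z}_n)$) from $\binom{\phi(n)}{2}$ gives $P_3 = \frac{1}{2}\bigl(\phi(n)^2 - 2\phi(n) + r\bigr)$.

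For $P_1$, I would apply inclusion--exclusion to the two adjacency conditions $ef = 0$ and $uv = 1$. Using the indexing after Proposition~\ref{Prop 1} (each $\mathfrak{e}_{2m}$ paired with its complement $\mathfrak{e}_{2m+1} = 1 - \mathfrak{e}_{2m}$) furnishes $2^k - 2$ ordered idempotent pairs with $ef = 0$, contributing $(2^k-2)\phi(n)^2$ ordered vertex pairs; the condition $uv = 1$ contributes $\phi(n)(2^k-1)^2$ ordered vertex pairs, from which one subtracts the $r(2^k-1)$ degenerate cases where $u \in U'(\mathbb{Z}_n)$ forces $(e,u) = (f,v)$; and the intersection of the two conditions contributes $(2^k - 2)\phi(n)$. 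Combining by inclusion--exclusion and halving yields a closed expression for $P_1$.

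The principal obstacle is the careful bookkeeping: the coincident-vertex correction $r(2^k - 1)$ must be applied before halving, and because the condition $uv = 1$ ties $v$ to $u^{-1}$ it counts with weight $\phi(n)$ rather than $\phi(n)^2$, so the two contributions in the inclusion--exclusion are asymmetric. Once $P_1$ and $P_3$ are substituted into $W = N(N-1) - P_1 + P_3$, expanding $(2^k-1)^2 = 2^{2k} - 2^{k+1} + 1$ and collecting coefficients of $\phi(n)^2$, $\phi(n)$, and $r$ produces the displayed closed form.
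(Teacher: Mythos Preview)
Your reduction $W = N(N-1) - P_1 + P_3$ is a genuinely different and considerably cleaner route than the paper's. The paper partitions $V(Cl_2(\mathbb{Z}_n))$ into the blocks $V_1,\dots,V_{2^k-1}$ and evaluates four separate sums $S_1,\dots,S_4$ (distances inside $V_1$, inside the remaining $V_l$, between $V_1$ and the other blocks, and between two non-$V_1$ blocks), with $S_4$ itself split into three further pieces $T_1,T_2,T_3$; your identity collapses all of this bookkeeping into the two counts $P_1$ and $P_3$. Your computation of $P_3$ is correct, and the inclusion--exclusion scheme for $P_1$ is the right idea.

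There is, however, a genuine gap in your count of $P_1$. You assert that the complementary pairing $\mathfrak{e}_{2m}\leftrightarrow 1-\mathfrak{e}_{2m}$ exhausts the ordered nonzero idempotent pairs $(e,f)$ with $ef=0$, yielding $2^k-2$ such pairs. This holds only for $k=2$. Under the CRT isomorphism $\mathbb{Z}_n\cong\prod_i\mathbb{Z}_{p_i^{\alpha_i}}$, nonzero idempotents correspond to nonempty subsets of $\{1,\dots,k\}$, and $ef=0$ means the two supports are \emph{disjoint}, not complementary; the correct number of ordered pairs of nonzero idempotents with $ef=0$ is $3^k-2^{k+1}+1$, which strictly exceeds $2^k-2$ for every $k\ge 3$ (already at $k=3$ one has $12$ versus $6$: e.g.\ the idempotents supported on $\{1\}$ and on $\{2\}$ annihilate each other but are not complementary). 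The paper's derivation of $S_4$ makes the identical oversight (its $T_1$ counts only the $(2^k-2)/2$ complementary pairs), so carrying your computation through with $2^k-2$ does reproduce the displayed formula; but as a proof the step rests on a false claim about idempotents, and the formula itself should be regarded as unverified for $k\ge 3$.
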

	\begin{proof} For $n=\prod_{i=1}^{k}{p_i}^{\alpha_i}$, $\alpha_i \in \mathbb{N}$, $\mathbb{Z}_n=\{\overline{0},\overline{1},\overline{2},...,\overline{n-1}\}$. To get the Wiener index of $Cl_2(\mathbb{Z}_n)$, we make the following cases: 
		\paragraph{Case 1.} $k=1$: $n={p_1}^{\alpha_1}$ and $\mathbb{Z}_{{p_1}^{\alpha_1}}=\{\overline {0}, \overline{1},...,\overline {{p_1}^{\alpha_1}-1}\}$ is local ring that contains only trivial idempotents $0$ and $1$. So, $Id(\mathbb{Z}_n)^*=\{1\}$ and $U(\mathbb{Z}_{{p_1}^{\alpha_1}})=\{u_1=1, u_2,...,u_{\phi({p_1}^{\alpha_1})}\}$. By Definition \ref{Clean Graph}, vertex $(1,1)\in Cl_2(\mathbb{Z}_{{p_1}^{\alpha_1}})$ is an isolated vertex, therefore $Cl_2(\mathbb{Z}_{{p_1}^{\alpha_1}})$ is disconnected. Thus, $W(Cl_2(\mathbb{Z}_{{p_1}^{\alpha_1}}))=\infty.$ 
		
		\paragraph{Case 2.}  $k\geq2$: $n=\prod_{i=1}^{k}{p_i}^{\alpha_i}$ and $\mathbb{Z}_n=\{\overline {0}, \overline{1},...,\overline {n-1}\}$. So, by Proposition \ref{Prop 1}, there are $2^k-1$ non-zero idempotents. Note that, $Id({\mathbb{Z}_n})^*=\{ \mathfrak{e}_1,\mathfrak{e}_2,\mathfrak{e}_3,,...,\mathfrak{e}_{2^k-2},\mathfrak{e}_{2^k-1} \},$ where $\mathfrak{e}_1= 1$ and $\mathfrak{e}_{2m+1} = 1 - \mathfrak{e}_{2m}$ for $m \in \{ 1,2, ...,\frac{2^{k}-4}{2} \frac{2^{k}-2}{2} \}$ and  $U(\mathbb{Z}_n)=\{u_1=1,u_2,u_3,...,u_r,u_{r+1},u_{r+2},..., u_{(\phi(n)-1)}, u_{\phi(n)}\}$. Now, we assume $U'(\mathbb{Z}_n)=\{u_1,u_2,...,u_r\}$. So, $\left|U'(\mathbb{Z}_n)\right|=r$. 
		and $\left|U''(\mathbb{Z}_n)\right|=\phi(n)-r$. Note that $\phi(n)-r$ is always even as for each $u\in U''(\mathbb{Z}_n)$ there exist unique $v\in U''(\mathbb{Z}_n)$ such that $uv=1$. 
		Therefore, there are $\frac{\phi(n)-r}{2}$ pairs of such $u$ and $v$ in $ U''(Z_n)$ such that $(e,u)\sim(f,v)$. 

		We may write $V(Cl_2(\mathbb{Z}_n))=V_1\sqcup V_2\sqcup...\sqcup V_{2^k-1}$, where $V_i=\{(\mathfrak{e}_i,u):\mathfrak{e}_i \in Id({\mathbb{Z}_n})^*, u \in U(\mathbb{Z}_n)\}$ and $|V_i|=\phi(n)$.\\ 
		Now, we find Wiener index of $Cl_2(\mathbb{Z}_n)$ 
		
		$W(Cl_2(\mathbb{Z}_n))=\frac{1}{2}\sum\limits_{u\in Cl_2(\mathbb{Z}_n)}d(u|Cl_2(\mathbb{Z}_n))$\\
		
		$=\sum\limits_{l=1}^{2^k-1}\sum\limits_{v_i,v_j\in V_l}d(v_i,v_j)+\sum\limits_{\substack{v_i\in V_p\\v_j\in V_q\\p\neq q}}d(v_i,v_j)$ \\
		
		$=\sum\limits_{v_i,v_j\in V_1} d(v_i,v_j)+\sum\limits_{l=2}^{2^k-1}\sum\limits_{v_i,v_j\in V_l}d(v_i,v_j)+ \sum\limits_{\substack{v_i\in V_1\\v_j\in V_q\\q \neq 1}}d(v_i,v_j)+\sum\limits_{\substack{v_i\in V_p\\v_j\in V_q\\p\neq q\neq 1}}d(v_i,v_j)$.\\
		
		Set $S_1=\sum\limits_{v_i,v_j\in V_1} d(v_i,v_j)$, $S_2=\sum\limits_{l=2}^{2^k-1}\sum\limits_{v_i,v_j\in V_l}d(v_i,v_j)$, $S_3=\sum\limits_{\substack{v_p\in V_1\\v_q\in V_q\\q \neq 1}}d(v_p,v_q)$, $S_4=\sum\limits_{\substack{v_p\in V_p\\v_q\in V_q\\p\neq q\neq 1}}d(v_p,v_q)$. 
		
		Now, we determine value of each summation as follows.\\
		\begin{enumerate}

			\item 	$ S_1=\sum\limits_{v_i,v_j\in V_1} d(v_i,v_j)$\\
			
			Since, $V_1=\{(1,u):1 \in Id(\mathbb{Z}_n)^*, u \in U(\mathbb{Z}_n)\}=\{v_1=(1,1),v_2=(1,u_2), v_3=(1,u_3),...,$\\$v_r=(1,u_r),v_{r+1}=(1,u_{r+1}),...,v_{\phi(n)}=(1,u_{\phi(n)})\}$.
			

				There are $\frac{\phi(n)-r}{2}$ pairs of vertices of type $v_i=(\mathfrak{e}_l ,u)$ and $v_j=(\mathfrak{e}_l ,v)$ in $V_l$ for all $l=1,2,3,...,$\\$2^k-2,2^k-1$ such that $v_i \sim v_j$ as $uv=1$. By Lemma \ref{Lemma 1}, if $uv=1$ and $e=f=1$,  $d(v_i,v_j)=1$. So that, for $l=1$,

				\begin{equation}\label{eq1}
					\sum\limits_{\substack{v_i,v_j\in V_1\\uv=1}}d(v_i,v_j)=\left[\frac{(\phi(n)-r)}{2}\right]
				\end{equation}
				
				Now assume there are no vertices $v_i=(1,u)$ and $v_j=(1,v)$ in $V_1$ such that $uv=1$. By Lemma \ref{Lemma 1}, if $uv\neq1$ and $e=f=1$, then $d(v_i,v_j)=3$. Therefore, by fixing $i$, $\sum d(v_i,v_j)=3(\phi(n)-i)$, for all  $j>i$ and $i=1,2,3,...,\phi(n)-1$. So that,

				\begin{equation}\label{eq2}
					\sum\limits_{\substack{v_i,v_j\in V_1\\uv\neq1}}d(v_i,v_j)=\sum\limits_{\substack{v_i,v_j\in V_1\\uv\neq1\\i=1}}^{\phi(n)-1}\sum\limits_{j>i} d(v_i,v_j)= 3\left[(\phi(n)-1)+(\phi(n)-2)+...+1\right].
				\end{equation}	
				
				Therefore, from $(\ref{eq1})$ and $(\ref{eq2})$ we get. \\
				
				$\sum\limits_{v_i,v_j\in V_1} d(v_i,v_j)=\sum\limits_{\substack{v_i,v_j\in V_1\\uv\neq1}}d(v_i,v_j)-2\sum\limits_{\substack{v_i,v_j\in V_1\\uv=1}}d(v_i,v_j)$ \\
				
				\hspace{1.04in}$=3\left[(\phi(n)-1)+(\phi(n)-2)+...+1\right]-2\left[\frac{(\phi(n)-r)}{2}\right]$\\

				After simplification, we get
				\begin{equation}\label{eq3}
					S_1=\sum\limits_{v_i,v_j\in V_1} d(v_i,v_j)=\frac{1}{2}\left[3\phi(n)^2-5\phi(n)+2r\right].
				\end{equation}

				
				\item 	$S_2=\sum\limits_{l=2}^{2^k-1}\sum\limits_{v_i,v_j\in V_l}d(v_i,v_j).$
				
				Since, $V_l=\{(\mathfrak{e}_l,u):\mathfrak{e}_l \in Id({\mathbb{Z}_n})^*, u \in U(\mathbb{Z}_n)\}=\{v_1=(\mathfrak{e}_l,1),v_2=(\mathfrak{e}_l,u_2), v_3=(\mathfrak{e}_l,u_3),...,v_r=(\mathfrak{e}_l,u_r),v_{r+1}=(\mathfrak{e}_l,u_{r+1}),...,v_{\phi(n)}=(\mathfrak{e}_l,u_{\phi(n)})\}$.
				
				Similarly, assume there are no vertices $v_i=(\mathfrak{e}_l,u)$ and $v_j=(\mathfrak{e}_l,v)$ in $V_l$ such that $uv=1$. By Lemma \ref{Lemma 1}, if $uv\neq1$ and $e=f\neq1$, then $d(v_i,v_j)=2$. Therefore, by fixing $i$, $\sum d(v_i,v_j)=2(\phi(n)-i)$, for all  $j>i$ and $i=1,2,3,...,\phi(n)-1$. So that,
				
				\begin{equation}\label{eq4}
					\sum\limits_{\substack{v_i,v_j\in V_l\\uv\neq1\\l\neq1}}d(v_i,v_j)	=\sum\limits_{\substack{v_i,v_j\in V_l\\uv\neq1\\i=1}}^{\phi(n)-1}\sum\limits_{j>i} d(v_i,v_j)= 2\{(\phi(n)-1)+(\phi(n)-2)+...+1\}
				\end{equation}
				
				Therefore, from $(\ref{eq1})$ and $(\ref{eq4})$ we get. \\
				
				$\sum\limits_{v_i,v_j\in V_l}d(v_i,v_j)= \sum\limits_{\substack{v_i,v_j\in V_l\\uv\neq1\\l\neq1}}d(v_i,v_j)-\sum\limits_{\substack{v_i,v_j\in V_l\\uv=1}}d(v_i,v_j)=2\{(\phi(n)-1)+(\phi(n)-2)+...+1\}
				-\left(\frac{\phi(n)-r}{2}\right) $\\
				
				So the distance between all pair vertices $v_i=(\mathfrak{e}_l,u)$ and $v_j=(\mathfrak{e}_l,v)$ in $V_l$ for all $l=2,3,...,2^k-2,2^k-1$ is
				
				$S_2=\sum\limits_{l=2}^{2^k-1}\sum\limits_{v_i,v_j\in V_l}d(v_i,v_j)=\sum\limits_{l=2}^{2^k-1}\left[\sum\limits_{\substack{v_i,v_j\in V_l\\uv\neq1}}d(v_i,v_j)-\sum\limits_{\substack{v_i,v_j\in V_1\\uv=1}}d(v_i,v_j)\right]$\\
				
				$=\left[2\{(\phi(n)-1)+(\phi(n)-2)+...+1\}
				-\left(\frac{\phi(n)-r}{2}\right)\right](2^k-2)$
				

				
				After Simplification, we get
				\begin{equation}\label{eq5}
					S_2=(2^{k-1}-1)\left(2\phi(n)^2-3\phi(n)+r\right).
				\end{equation}
				
				
				\item 	$S_3=\sum\limits_{\substack{v_p\in V_1\\v_q\in V_q\\q \neq 1}}d(v_p,v_q).$
				
				Since $V_q=\{(\mathfrak{e}_q,u):\mathfrak{e}_q \in Id({\mathbb{Z}_n})^*, u \in U(\mathbb{Z}_n)\}=\{v_1=(\mathfrak{e}_q,1),v_2=(\mathfrak{e}_q,u_2), v_3=(\mathfrak{e}_q,u_3),...,v_r=(\mathfrak{e}_q,u_r),v_{r+1}=(\mathfrak{e}_q,u_{r+1}),...,v_{\phi(n)}=(\mathfrak{e}_q,u_{\phi(n)})\}$ and
				$V_1=\{(1,u):1 \in Id(\mathbb{Z}_n)^*, u \in U(\mathbb{Z}_n)\}=\{v_1=(1,1),v_2=(1,u_2), v_3=(1,u_3),...,v_r=(1,u_r),v_{r+1}=(1,u_{r+1}),...,v_{\phi(n)}=(1,u_{\phi(n)})\}$. Let $v_i=(1,u)\in V_1$ and $v_j=(\mathfrak{e}_q,v)\in V_q$, $q\neq 1$, by Lemma \ref{Lemma 1},
				
				\begin{center}
					$ d(v_i,v_j)=\begin{cases}
						1 & uv=1\\
						2 & uv\neq 1.
					\end{cases} $ \\
				\end{center}
				
				There are $\phi(n)$ and $\phi(n)-1$ pairs of $v_i=(1,u)\in V_1$ and $v_j=(\mathfrak{e}_q,v)\in V_l$, $q\neq 1$, such that $ d(v_i,v_j)=1$ and $ d(v_i,v_j)=2$ respectively. So that, for $q=2,3,4,...,2^k-1$.\\
				
				$\sum\limits_{\substack{v_p\in V_1\\v_q\in V_q\\q \neq 1}}d(v_p,v_q)= (2^k-2)\left[\phi(n)+\left(2(\phi(n)-1)\right)\phi(n)\right]$.
				After simplification, we get 
				\begin{equation}\label{eq6}
					S_3=\sum\limits_{\substack{v_p\in V_1\\v_q\in V_q\\q \neq 1}}d(v_p,v_q)=(2^k-2)\left[2\phi(n)^2-\phi(n)\right]
				\end{equation}
				
				\item 	$S_4=\sum\limits_{\substack{v_p\in V_p\\v_q\in V_q\\p\neq q\neq 1}}d(v_p,v_q).$\\
				
				$\sum\limits_{\substack{v_p\in V_p\\v_q\in V_q\\p\neq q\neq 1}}d(v_p,v_q)=\sum\limits_{\substack{e,f\in Id(\mathbb{Z}_n)^*\\e\neq f\\u,v\in U(\mathbb{Z}_n)}}d((e,u),(f,v))$
				
				Since $V_p=\{(\mathfrak{e}_p,u):1 \in Id(\mathbb{Z}_n)^*, u \in U(\mathbb{Z}_n)\}=\{v_1=(\mathfrak{e}_p,1),v_2=(\mathfrak{e}_p,u_2), v_3=(\mathfrak{e}_p,u_3),...,v_r=(\mathfrak{e}_p,u_r),v_{r+1}=(\mathfrak{e}_p,u_{r+1}),...,v_{\phi(n)}=(\mathfrak{e}_p,u_{\phi(n)})\}$ and $V_q=\{(\mathfrak{e}_q,u):\mathfrak{e}_q \in Id({\mathbb{Z}_n})^*, u \in U(\mathbb{Z}_n)\}=\{v_1=(\mathfrak{e}_q,1),v_2=(\mathfrak{e}_q,u_2), v_3=(\mathfrak{e}_q,u_3),...,v_r=(\mathfrak{e}_q,u_r),v_{r+1}=(\mathfrak{e}_q,u_{r+1}),...,v_{\phi(n)}=(\mathfrak{e}_q,u_{\phi(n)})\}$.
				
				We write $S_4$ in explicit form as:\\
				\begin{equation}\label{eq7}
					S_4=\sum\limits_{\substack{(e,u)\in V_p\\(f,v)\in V_q\\p\neq q\neq 1}}d((e,u),(f,v))=\sum\limits_{\substack{(e,u)\in V_p\\(f,v)\in V_q\\ef=0\\p\neq q\neq1}}d((e,u),(f,v))+\sum\limits_{\substack{(e,u)\in V_p\\(f,v)\in V_q\\ef\neq0, uv=1\\p\neq q\neq 1}}d((e,u),(f,v))+\sum\limits_{\substack{(e,u)\in V_p\\(f,v)\in V_q\\ef\neq0, uv\neq1\\p\neq q\neq 1}}d((e,u),(f,v))
				\end{equation}
				Set $T_1=\sum\limits_{\substack{(e,u)\in V_p\\(f,v)\in V_q\\ef=0\\p\neq q\neq1}}d((e,u),(f,v))$, $T_2=\sum\limits_{\substack{(e,u)\in V_p\\(f,v)\in V_q\\ef\neq0, uv=1\\p\neq q\neq 1}}d((e,u),(f,v))$,  $T_3=\sum\limits_{\substack{(e,u)\in V_p\\(f,v)\in V_q\\ef\neq0, uv\neq1\\p\neq q\neq 1}}d((e,u),(f,v))$
				
				Now, we determine value of each summation $T_1$, $T_2$ and $T_3$ one by one as follows:
				\begin{enumerate}[label=(\roman*)]

					\item 	$T_1=\sum\limits_{\substack{(e,u)\in V_p\\(f,v)\in V_q\\ef=0\\p\neq q\neq1}}d((e,u),(f,v))$. 
					
					There are $\frac{2^k-2}{2}$ pairs of vertex set $V_p$ and $V_q$ such that for $v_p=(e,u)\in V_p$ and $v_q=(f,u)\in V_q$, $ef=1$. By Lemma \ref{Lemma 1} $d(v_p,v_q)=1$. So that, 
					
					\begin{equation}\label{eq8}
						T_1=\sum\limits_{\substack{(e,u)\in V_p\\(f,v)\in V_q\\ef=0\\p\neq q\neq1}}d((e,u),(f,v))=\phi(n)\left(\frac{2^k-2}{2}\right).
					\end{equation}
					
					\item 	$T_2=\sum\limits_{\substack{(e,u)\in V_p\\(f,v)\in V_q\\ef\neq0, uv=1\\p\neq q\neq 1}}d((e,u),(f,v)).$
					
					By lemma \ref{Lemma 1}, $d((e,u),(f,v))=\begin{cases}
						1 & ef=0\text{ or }uv=1\\
						2 & ef\neq 0 \text{ and } uv\neq1.
					\end{cases}$
					
					For each $(\mathfrak{e}_{2m},u_i) \in V_{2m}$ and $(1-\mathfrak{e}_{2m},u_j)\in V_{2m+1}$, $m=1,2,...,\frac{2^{k}-4}{2}$, there is unique $(f,u)$ in all $V_q$, $q>(2m+1)$ such that $(\mathfrak{e}_{2m},u_i)$ and $(1-\mathfrak{e}_{2m},u_j)$ are adjacent to $(f,u)$. Therefore, $d((\mathfrak{e}_{2m},u_i),(f,u))=d((1-\mathfrak{e}_{2m},u_i),(f,u))=1$ for all $i=1,2,...,\phi(n)-1,\phi(n).$ There are $\phi(n)$ vertices in each vertex set $V_p$ and $2^{k-1}-1$ pair of vertex sets. So that,
					
					$T_2=\sum\limits_{\substack{(e,u)\in V_p\\(f,v)\in V_q\\ef\neq0, uv=1\\p\neq q\neq 1}}d((e,u),(f,v))=2\phi(n)\sum\limits_{i=1}^{2^{k-1}-1}(2^k-2(i+1)).$
					
					After simplification, we get
					\begin{equation}\label{eq9} 
						T_2=2\phi(n)\left[2^{2k-1}-2^{k+1}-(2^{k-2}-1)(2^k+2)\right].
					\end{equation}
					\item 	$T_3=\sum\limits_{\substack{(e,u)\in V_p\\(f,v)\in V_q\\ef\neq0, uv\neq1\\p\neq q\neq 1}}d((e,u),(f,v))$
					
					By the similar argument for $d((\mathfrak{e}_{2m},u_i),(f,u))=d((1-\mathfrak{e}_{2m},u_i),(f,u))=2$, we apply multiplication principal. So that,
					
					$T_3=\sum\limits_{\substack{(e,u)\in V_p\\(f,v)\in V_q\\ef\neq0, uv\neq1\\p\neq q\neq 1}}d((e,u),(f,v))=4(\phi(n)-1)(\phi(n))\sum\limits_{i=1}^{2^{k-1}-1}(2^k-2(i+1)).$
					
					After simplification, we get
					\begin{equation}\label{eq10}
						T_3=(4\phi(n)^2-4\phi(n))\left[2^{2k-1}-2^{k+1}-(2^{k-2}-1)(2^k+2)\right]
					\end{equation}
				\end{enumerate}
				On adding $T_1$, $T_2$ and $T_3$, we get
				\begin{equation}\label{eq11}
					S_4=\sum\limits_{\substack{(e,u)\in V_p\\(f,v)\in V_q\\p\neq q\neq 1}}d((e,u),(f,v))=\phi(n)^2\left(\frac{2^k-2}{2}\right)+2\left[2\phi(n)^2-\phi(n)\right]\left[2^{2k-1}-2^{k+1}-(2^{k-2}-1)(2^k+2)\right]. 
				\end{equation}
			\end{enumerate}	
			On adding value of $S_1$, $S_2$, $S_3$ and $S_4$, we get Wiener index of clean of ring $\mathbb{Z}_n$ as 
			
			$W(Cl_2(\mathbb{Z}_n))=\frac{1}{2}\left[\phi(n)^2\left(2(2^{2k})-5(2^k)+5\right)-\phi(n)\left(2^{2k}-2^k+3\right)+2^kr\right].$
			
		\end{proof}
		\begin{corollary}
			Let $p$, $q$ and $r$ are distinct primes. Then

			\begin{enumerate}
				
				\item $W(Cl_2(Z_{pq}))=\begin{cases}
					\frac{17\left((p-1)(q-1)\right)^2-15(p-1)(q-1)+16 }{2} & \text{ if } p\text{ and } q\neq 2\\
					
					\frac{17\left((p-1)(q-1)\right)^2-15(p-1)(q-1)+8 }{2} & \text{ if } p\text{ or } q= 2
				\end{cases} $
				
				\item $W(Cl_2(Z_{pqr}))=\begin{cases}
					\frac{93\left((p-1)(q-1)(r-1)\right)^2-59(p-1)(q-1)(r-1)+64 }{2} & \text{ if } p\text{ and } q\neq 2\\
					
					\frac{93\left((p-1)(q-1)(r-1)\right)^2-59(p-1)(q-1)(r-1)+32 }{2} & \text{ if } p \text{ or } q= 2.\\
				\end{cases} $
			\end{enumerate}
		\end{corollary}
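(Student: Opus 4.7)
The corollary is a direct specialization of Theorem~\ref{thm 1} to the cases $k=2$ (for $n=pq$) and $k=3$ (for $n=pqr$), combined with Proposition~\ref{Prop 3} to evaluate $r=|U'(\mathbb{Z}_n)|$ in each sub-case. So my plan is simply to substitute and simplify, taking care that the symbol $k$ in Theorem~\ref{thm 1} counts \emph{all} distinct prime divisors of $n$, while the $k$ appearing in Proposition~\ref{Prop 3} counts only the \emph{odd} prime divisors; this bookkeeping is the one place where a mistake could creep in.

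First I would handle $n=pq$. Here $\phi(n)=(p-1)(q-1)$ and Theorem~\ref{thm 1} with $k=2$ gives $W(Cl_2(\mathbb{Z}_{pq}))=\tfrac{1}{2}\bigl[17\phi(n)^2-15\phi(n)+4r\bigr]$, since $2(2^{2k})-5(2^k)+5=17$, $2^{2k}-2^k+3=15$, and $2^k=4$. Next, I apply Proposition~\ref{Prop 3}: if both $p,q$ are odd then $m=0$ and the ``$k$'' in Proposition~\ref{Prop 3} equals $2$, giving $r=2^2=4$, hence $4r=16$; if instead one of $p,q$ equals $2$ then $m=1$ and the odd-prime count is $1$, giving $r=2^1=2$, hence $4r=8$. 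Substituting these into the displayed formula yields the two stated expressions in part (1).

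For $n=pqr$ the argument is identical in spirit. With $k=3$ one computes $2(2^{2k})-5(2^k)+5=128-40+5=93$ and $2^{2k}-2^k+3=64-8+3=59$ and $2^k=8$, so Theorem~\ref{thm 1} becomes $W(Cl_2(\mathbb{Z}_{pqr}))=\tfrac{1}{2}\bigl[93\phi(n)^2-59\phi(n)+8r\bigr]$ where $\phi(n)=(p-1)(q-1)(r-1)$. Then Proposition~\ref{Prop 3} gives $r=2^3=8$ when all three primes are odd (so $8r=64$), and $r=2^2=4$ when one of the primes is $2$ (so $8r=32$). Substituting produces the two cases of part (2).

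The hard part, to the extent there is one, is not analytic but notational: carefully distinguishing the two uses of the letter $k$ and matching the parity hypothesis in the corollary (``$p$ and $q\neq 2$'' versus ``$p$ or $q=2$'') with the exponent $m$ in Proposition~\ref{Prop 3}. Once that is set up correctly the rest is arithmetic simplification, and no further graph-theoretic argument is required beyond what Theorem~\ref{thm 1} has already supplied.
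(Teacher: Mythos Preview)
Your proposal is correct and matches the paper's approach: the corollary is stated without a separate proof, as it follows immediately from Theorem~\ref{thm 1} by substituting $k=2$ (resp.\ $k=3$) and using Proposition~\ref{Prop 3} to determine $r=|U'(\mathbb{Z}_n)|$ in each parity sub-case. Your careful remark about the two distinct meanings of ``$k$'' (all prime divisors in Theorem~\ref{thm 1} versus odd prime divisors in Proposition~\ref{Prop 3}) is exactly the right bookkeeping, and your arithmetic checks out.
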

		
		Figure $1$  is an illustration of the above theorem . In this graph, we have partitioned the vertex set $ V(Cl_2(\mathbb{Z}_{pq})) $ in to three disjoint sets as $ V_1= \{(1,u):1\in Id(\mathbb{Z}_{pq}), u\in U(\mathbb{Z}_{pq})\}$, $ V_2= \{(e,u):e\in Id(\mathbb{Z}_{pq}), u\in U(\mathbb{Z}_{pq})\}$ and $ V_3= \{(1-e,u):1-e\in Id(\mathbb{Z}_{pq}), u\in U(\mathbb{Z}_{pq})\}$. 
		\begin{center}
			
			\begin{tikzpicture}\label{Graph 1}
				\draw[rotate=90] (-1,0) ellipse (150pt and 50pt);
				\draw (0,-6.5) node[below]{$V_3$};
				\fill (0,3) circle (2pt) node[right] {$w_1$}; 
				\fill (0,2) circle (2pt) node[right] {$w_2$}; 
				\fill (0,1) circle (2pt) node[right] {$w_3$}; 
				\fill (0,0) circle (2pt) node[right] {$w_4$};
				\fill (0,-1) circle (2pt) node[right] {$w_5$};
				\fill (0,-2) circle (2pt) node[right] {$w_6$};
				\fill (0,-2.5) circle (1pt); 
				\fill (0,-3) circle (1pt);
				\fill (0,-3.5) circle (1pt);
				\fill (0,-4) circle (2pt) node[right] {$w_{\phi(pq)-1}$};
				\fill (0,-5) circle (2pt) node[right] {$w_{\phi(pq)}$};
				
				\draw[rotate=90] (-1,4) ellipse (150pt and 50pt);
				\draw (-4,-7.25) node[below]{Figure 1: $Cl_2(\mathbb{Z}_{pq})$};
				\draw (-4,-6.5) node[below]{$V_2$};
				\fill (-4,3) circle (2pt) node[below left] {$v_1$}; 
				\fill (-4,2) circle (2pt) node[below left] {$v_2$}; 
				\fill (-4,1) circle (2pt) node[below left] {$v_3$}; 
				\fill (-4,0) circle (2pt) node[below left] {$v_4$};
				\fill (-4,-1) circle (2pt) node[below left] {$v_5$};
				\fill (-4,-2) circle (2pt) node[below left] {$v_6$};
				\fill (-4,-2.5) circle (1pt); 
				\fill (-4,-3) circle (1pt);
				\fill (-4,-3.5) circle (1pt);
				\fill (-4,-4) circle (2pt) node[below left] {$v_{\phi(pq)-1}$};
				\fill (-4,-5) circle (2pt) node[below left] {$v_{\phi(pq)}$};
				
				\draw[rotate=90] (-1,8) ellipse (150pt and 50pt);
				\draw (-8,-6.5) node[below]{$V_1$};
				\fill (-8,3) circle (2pt) node[left] {$u_1$}; 
				\fill (-8,2) circle (2pt) node[left] {$u_2$}; 
				\fill (-8,1) circle (2pt) node[left] {$u_3$}; 
				\fill (-8,0) circle (2pt) node[left] {$u_4$};
				\fill (-8,-1) circle (2pt) node[left] {$u_5$};
				\fill (-8,-2) circle (2pt) node[left] {$u_6$};
				\fill (-8,-2.5) circle (1pt); 
				\fill (-8,-3) circle (1pt);
				\fill (-8,-3.5) circle (1pt);
				\fill (-8,-4) circle (2pt) node[left] {$u_{\phi(pq)-1}$};
				\fill (-8,-5) circle (2pt) node[left] {$u_{\phi(pq)}$};
				
				\draw [black] (-8,3) to (-4,3) to (0,3) (-8,3) to[out=15,in=165] (0,3) (-4,3) to (0,2) (-4,3) to (0,1) (-4,3) to (0,0) (-4,3) to (0,-1) (-4,3) to (0,-2) (-4,3) to (0,-4) (-4,3) to (0,-5) ;
				\draw [black] (-8,2) to (-4,2) to (0,2) (-8,2) to[out=15,in=165] (0,2) (-4,2) to (0,1) (-4,2) to (0,2)(-4,2) to (0,3) (-4,2) to (0,0) (-4,2) to (0,-1) (-4,2) to (0,-2) (-4,2) to (0,-4) (-4,2) to (0,-5) ;
				\draw [black] (-8,1) to (-4,1) to (0,1) (-8,1) to[out=15,in=165] (0,1) (-4,1) to (0,1) (-4,1) to (0,2)(-4,1) to (0,3) (-4,1) to (0,0) (-4,1) to (0,-1) (-4,2) to (0,-2) (-4,2) to (0,-4) (-4,2) to (0,-5) ;
				\draw [black] (-8,0) to (-4,0) to (0,0) (-8,0) to[out=15,in=165] (0,0) (-4,0) to (0,0) (-4,0) to (0,2)(-4,0) to (0,3) (-4,0) to (0,1) (-4,0) to (0,-1) (-4,0)to (0,-2) (-4,0) to (0,-4) (-4,0) to (0,-5) ;
				\draw [black] (-8,-2) to (-8,-1)to (-4,-2) to (0,-2) (-8,-1) to[out=15,in=160] (0,-2) (-4,-1)to (-4,-2) (0,-1)to(0,-2) (-4,-1) to (0,0) (-4,-1) to (0,2)(-4,-1) to (0,3) (-4,-1) to (0,1) (-4,-1) to (0,-1) (-4,-1) to (0,-2) (-4,-1) to (0,-4) (-4,-1) to (0,-5);
				\draw[black] (-8,-2) to (-4,-1) (-8,-2) to [out=-15,in=-160](0,-1) (-4,-2)to(0,3) (-4,-2)to(0,2) (-4,-2) to (0,1) (-4,-2) to (0,0) (-4,-2) to (0,-1) (-4,-2) to (0,-2) (-4,-2) to (0,-4) (-4,-2) to (0,-5);
				\draw[black] (-8,-5) to (-8,-4)to(-4,-5)to(-4,-4)to(0,3) (-8,-4) to[out=15,in=160] (0,-5) (-8,-5)to[out=-15,in=-160](0,-4)to(0,-5)(-8,-5)to(-4,-4)to(0,3)(-4,-4)to(0,2)(-4,-4)to(0,1)(-4,-4)to(0,0)(-4,-4)to(0,-1)(-4,-4)to(0,-2)(-4,-4)to(0,-4)(-4,-4)to(0,-5) (-4,-5)to(0,3)(-4,-5)to(0,2)(-4,-5)to(0,1)(-4,-5)to(0,0)(-4,-5)to(0,-1)(-4,-5)to(0,-2)(-4,-5)to(0,-4)(-4,-5)to(0,-5);
			\end{tikzpicture}
			
		\end{center}
		\begin{example}
			Consider ring $\mathbb{Z}_{15}$, then $ Id(\mathbb{Z}_{15})^*=\{1,6,10\} $ and $U(\mathbb{Z}_{15})=\{1,2,4,7,8,11,13,14\}$, $V(Cl_2(\mathbb{Z}_{15}))=\{(1,1),(1,2),...,(1,14),(6,1),(6,2),...,(6,14),...,(10,14)\}$. Here $ n=15 $, $ \phi(15)= 8 $, and $ k=2 $. So that,
			
			$ W(Cl_2(\mathbb{Z}_{15}))=\frac{1}{2}[8^2(2(2^{4})-5(2^2)+5)-8(2^4-2^2+3)+2^22^2] =492$.
			
		\end{example}
		\begin{center}
			\scalebox{0.6}{
				\begin{tikzpicture}\label{Figure 2}
					\fill (4,0) circle (2pt) node[right]{(1,1)};
					\fill (3.864,1.035) circle (2pt)node[right]{(1,2)};
					\fill (3.464,2) circle (2pt)node[right]{(1,4)}; 
					\fill (2.828,2.828) circle (2pt)node[right]{(1,7)};
					\fill (2,3.464) circle (2pt)node[above right]{(1,8)}; 
					\fill (1.035,3.864) circle (2pt) node[above right]{(1,11)}; 
					\fill (0,4) circle (2pt)node[above]{(1,13)}; 
					\fill (-1.035,3.864) circle (2pt)node[above left]{(1,14)};
					\fill (-2,3.464) circle (2pt)node[above left]{(6,1)};
					\fill (-2.828,2.828) circle (2pt)node[left]{(6,2)};  
					\fill (-3.464,2) circle (2pt)node[left]{(6,4)};
					\fill (-3.864,1.035) circle (2pt)node[left]{(6,7)};
					\fill (-4,0) circle (2pt)node[left]{(6,8)}; 
					\fill (-3.864,-1.035) circle (2pt)node[left]{(6,11)}; 
					\fill (-3.464,-2) circle (2pt)node[left]{(6,13)};
					\fill (-2.828,-2.828) circle (2pt)node[left]{(6,14)};  
					\fill (-2,-3.464) circle (2pt)node[below left]{(10,1)}; 
					\fill (-1.035,-3.864) circle (2pt)node[below]{(10,2)}; 
					\fill (0,-4) circle (2pt)node[below]{(10,4)};
					\draw (0,-4.5) node[below]{Figure 2: $Cl_2(\mathbb{Z}_{15})$}; 
					\fill (1.035,-3.864) circle (2pt)node[below]{(10,7)};
					\fill (2,-3.464) circle (2pt)node[below right]{(10,8)};
					\fill (2.828,-2.828) circle (2pt)node[below right]{(10,11)};  
					\fill (3.464,-2) circle (2pt)node[right]{(10,13)};
					\fill (3.864,-1.035) circle (2pt)node[right]{(10,14)};
					\draw (4,0)--(-2,3.464);
					\draw (4,0)--(-2,-3.464);
					\draw (3.864,1.035)--(2,3.464);
					\draw (3.864,1.035)--(-4,0);
					\draw (3.864,1.035)--(2,-3.464);
					\draw (3.464,2)--(-3.464,2);
					\draw (3.464,2)--(0,-4);
					\draw (2.828,2.828)--(0,4);
					\draw (2.828,2.828)--(-3.464,-2);
					\draw (2.828,2.828)--(3.464,-2);
					\draw (2,3.464)--(-2.828,2.828);
					\draw (2,3.464)--(-1.035,-3.864);
					\draw (1.035,3.864)--(-3.864,-1.035);
					\draw (1.035,3.864)--(2.828,-2.828);
					\draw (0,4)--(-3.864,1.035);
					\draw (0,4)--(1.035,-3.864);
					\draw (-1.035,3.864)--(-2.828,-2.828);
					\draw (-1.035,3.864)--(-2.828,-2.828);
					\draw (-1.035,3.864)--(3.864,-1.035);
					\draw (-2,3.464)--(-2,-3.464);
					\draw (-2.828,2.828)--(-4,0);
					\draw (-2.828,2.828)--(-2,-3.464);
					\draw (-2.828,2.828)--(-1.035,-3.864);
					\draw (-2.828,2.828)--(0,-4);
					\draw (-2.828,2.828)--(1.035,-3.864);
					\draw (-2.828,2.828)--(2,-3.464);
					\draw (-2.828,2.828)--(2.828,-2.828);
					\draw (-2.828,2.828)--(3.464,-2);
					\draw (-2.828,2.828)--(3.864,-1.035);
					\draw (-3.464,2)--(-2,-3.464);
					\draw (-3.464,2)--(-1.035,-3.864);
					\draw (-3.464,2)--(0,-4);
					\draw (-3.464,2)--(1.035,-3.864);
					\draw (-3.464,2)--(-2,-3.464);
					\draw (-3.464,2)--(2,-3.464);
					\draw (-3.464,2)--(2.828,-2.828);
					\draw (-3.464,2)--(3.464,-2);
					\draw (-3.464,2)--(3.864,-1.035);
					\draw (-3.864,1.035)--(-3.464,-2);
					\draw (-3.864,1.035)--(-1.035,-3.864);
					\draw (-3.864,1.035)--(0,-4);
					\draw (-3.864,1.035)--(1.035,-3.864);
					\draw (-3.864,1.035)--(-2,-3.464);
					\draw (-3.864,1.035)--(2,-3.464);
					\draw (-3.864,1.035)--(2.828,-2.828);
					\draw (-3.864,1.035)--(3.464,-2);
					\draw (-3.864,1.035)--(3.864,-1.035);
					\draw (-4,0)--(-1.035,-3.864);
					\draw (-4,0)--(0,-4);
					\draw (-4,0)--(1.035,-3.864);
					\draw (-4,0)--(-2,-3.464);
					\draw (-4,0)--(2,-3.464);
					\draw (-4,0)--(2.828,-2.828);
					\draw (-4,0)--(3.464,-2);
					\draw (-4,0)--(3.864,-1.035);
					\draw (-3.864,-1.035)--(-1.035,-3.864);
					\draw (-3.864,-1.035)--(0,-4);
					\draw (-3.864,-1.035)--(1.035,-3.864);
					\draw (-3.864,-1.035)--(-2,-3.464);
					\draw (-3.864,-1.035)--(2,-3.464);
					\draw (-3.864,-1.035)--(2.828,-2.828);
					\draw (-3.864,-1.035)--(3.464,-2);
					\draw (-3.864,-1.035)--(3.864,-1.035);
					\draw (-3.464,-2)--(-1.035,-3.864);
					\draw (-3.464,-2)--(0,-4);
					\draw (-3.464,-2)--(1.035,-3.864);
					\draw (-3.464,-2)--(-2,-3.464);
					\draw (-3.464,-2)--(2,-3.464);
					\draw (-3.464,-2)--(2.828,-2.828);
					\draw (-3.464,-2)--(3.464,-2);
					\draw (-3.464,-2)--(3.864,-1.035);
					\draw (-2.828,-2.828)--(-1.035,-3.864);
					\draw (-2.828,-2.828)--(0,-4);
					\draw (-2.828,-2.828)--(1.035,-3.864);
					\draw (-2.828,-2.828)--(-2,-3.464);
					\draw (-2.828,-2.828)--(2,-3.464);
					\draw (-2.828,-2.828)--(2.828,-2.828);
					\draw (-2.828,-2.828)--(3.464,-2);
					\draw (-2.828,-2.828)--(3.864,-1.035);
					\draw (-1.035,-3.864)--(2,-3.464);
					\draw (1.035,-3.864)--(3.464,-2);
					
				\end{tikzpicture}
			}
		\end{center}

		The close form to calculate the Wiener index of $Cl_2(\mathbb{Z}_n)$ is given in Table \ref{Table 1}. From this table, we can calculate the Wiener index of $ Cl_2(\mathbb{Z}_n) $. In Table \ref{Table 1}, all $ p_i $'s are distinct primes and not equal to 2. Here $ x= \phi(n)$.
		\begin{table}[h!]
			\caption{Wiener index of $Cl_2(\mathbb{Z}_n)$}\label{Table 1}
			\begin{center}

				\begin{tabular}{|c|c|c|c|}
					\hline
					$ n $ &$ k $&$ r $&$W(Cl_2(\mathbb{Z}_n))$ \\\hline
					$2{p_1}^{\alpha_1}$&$ 2 $&$  2$&$ \frac{1}{2}(17x^2-15x+8) $\\\hline
					$2{p_1}^{\alpha_1}{p_2}^{\alpha_2}$&$3$&$4$&$\frac{1}{2}(29x^2-59x+32)$\\\hline
					$2{p_1}^{\alpha_1}{p_2}^{\alpha_2}{p_3}^{\alpha_3}$&$4  $&$8 $&$\frac{1}{2}(181x^2-243x+128)$\\\hline
					$2{p_1}^{\alpha_1}{p_2}^{\alpha_2}{p_3}^{\alpha_3}{p_4}^{\alpha_4}$&$ 5 $&$ 16 $&$\frac{1}{2}(869x^2-995x+512)$\\\hline
					$2{p_1}^{\alpha_1}{p_2}^{\alpha_2}{p_3}^{\alpha_3}{p_4}^{\alpha_4}{p_5}^{\alpha_5}$&$6$&$32$&$\frac{1}{2}(3781x^2-4035x+2048)$\\\hline
					${p_1}^{\alpha_1}{p_2}^{\alpha_2}$&$2$&$4$&$\frac{1}{2}(x^2-15x+16)$\\\hline
					${p_1}^{\alpha_1}{p_2}^{\alpha_2}{p_3}^{\alpha_3}$&$3  $&$8  $&$  \frac{1}{2}(29x^2-59x+64)$\\\hline
					${p_1}^{\alpha_1}{p_2}^{\alpha_2}{p_3}^{\alpha_3}{p_4}^{\alpha_4}$&$4  $&$16  $&$  \frac{1}{2}(181x^2-243x+256)$\\\hline
					${p_1}^{\alpha_1}{p_2}^{\alpha_2}{p_3}^{\alpha_3}{p_4}^{\alpha_4}{p_5}^{\alpha_5}$&$  5$&$ 32 $&$  \frac{1}{2}(869x^2-995x+1024)$\\\hline
					${p_1}^{\alpha_1}{p_2}^{\alpha_2}{p_3}^{\alpha_3}{p_4}^{\alpha_4}{p_5}^{\alpha_5}{p_3}^{\alpha_6}$&$6$&$64  $&$ \frac{1}{2}(3781x^2-4035x+4096) $\\\hline

				\end{tabular}
			\end{center}
			
		\end{table}
		
		We can find Wiener index of clean graph of $\mathbb{Z}_n$ by plotting scattered diagram as follows:
		
		For example, we consider $ n=2{p_1}^{\alpha_1}$.
		
		\hspace{1cm}
		
		\pgfplotsset{ 
			standard/.style={
				axis line style = thick,
				trig format= rad,
				enlargelimits=false,
				axis x line = middle,
				axis y line = middle,
				enlarge x limits=0.15,
				enlarge y limits =0.15,
				every axis x label/.style={at={(current axis.right of origin)},anchor=north west},
				every axis y label/.style={at={(current axis.above origin)},anchor=south east},
				ticklabel style={font=\tiny,fill=white}
			}
		}
		\scalebox{0.9}{
			\begin{tikzpicture}\label{Graph 3}
				\begin{axis}[standard,
					xlabel={$ \phi(n) $},
					ylabel={$W(Cl_2(\mathbb{Z}_n))$},
					grid=both, 
					ytick={23,110,265,488,779,1138,1565},
					yticklabels={$ 23 $,$ 110 $,$ 265 $,$ 488 $,$ 779 $,$ 1138 $,$ 1565 $},
					xtick={2,4,6,8,10,12,14},
					xticklabels={$ 2 $,$ 4 $,$ 6 $,$ 8 $,$ 10 $,$ 12 $,$ 14 $},
					scatter/classes={a={black}, b={mark=o, draw=black}},
					xmin=-0.5, xmax=16, ymin=-1, ymax=1800]
					\node [anchor=center, label=south west:{$ O $}] at (axis cs:0,0){};
					\addplot [scatter,only marks, scatter src=explicit symbolic]coordinates {
						(2,23) [a]
						(4,110)[a]
						(6,265)[a]
						(8,488)[a]
						(10,779)[a]
						(12,1138)[a]
						(14,1565)[b]
					};	
					
				\end{axis}
				\draw (4,-0.2) node[below]{Figure 3: Graph of $ \phi(n) $ versus $W(Cl_2(\mathbb{Z}_{2{p_1}^{\alpha_1}))}$ }; 
		\end{tikzpicture}}
		
		There does not exist $ n $ such that $ \phi(n)=14 $. Therefore, we have drawn circle at $ x=14 $. Table \ref{Table 2} gives the idea to understand the Figure $3$.
		
		\begin{table}[h!]
			\caption{Wiener index of $W(Cl_2(\mathbb{Z}_n))$}\label{Table 2}
			\begin{center}
				
				\begin{tabular}{|c|c|c|c|c|c|c|}
					\hline
					$ n $&$ 6 $&$ 10,12 $&$14,18  $&$ 20,24 $&$22  $&$ 26,36 $\\\hline
					$ \phi(n) $&$ 2 $&$ 4 $&$ 6 $&$8  $&$ 10 $&$12 $\\\hline
					$ W(Cl_2(\mathbb{Z}_n)) $&$ 23 $&$110  $&$ 265 $&$488  $&$  779$&$1138  $\\\hline

				\end{tabular}
			\end{center}
			
		\end{table}
		
		\begin{theorem}
			For $n=\prod\limits_{i=1}^{2^k-1}{p_i}^{\alpha_i}$,  $Cl_2(\mathbb{Z}_n)$ contains a perfect matching and $\mu(Cl_2(\mathbb{Z}_n))=\frac{\phi(n)(2^k-1)}{2}$. 
		\end{theorem}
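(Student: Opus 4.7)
The plan is to exhibit an explicit perfect matching of $Cl_2(\mathbb{Z}_n)$; this automatically yields $\mu=|V|/2=\frac{(2^k-1)\phi(n)}{2}$. Write $V(Cl_2(\mathbb{Z}_n))=V_1\sqcup V_2\sqcup\cdots\sqcup V_{2^k-1}$ with $V_q=\{(\mathfrak{e}_q,u):u\in U(\mathbb{Z}_n)\}$, and group the non-trivial idempotents into the $2^{k-1}-1$ complementary pairs $\{\mathfrak{e}_{2m},\mathfrak{e}_{2m+1}\}$ satisfying $\mathfrak{e}_{2m}\mathfrak{e}_{2m+1}=0$. In particular, the bipartite subgraph between $V_{2m}$ and $V_{2m+1}$ is complete, while inside any single $V_q$ the only edges are the inverse-pair edges $(\mathfrak{e}_q,u)\sim(\mathfrak{e}_q,u^{-1})$ for non-self-invertible $u$.

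The matching is built in four stages. \emph{Stage 1}: inside $V_1$ pair $(1,u)$ with $(1,u^{-1})$ for each non-self-invertible $u$, giving $\frac{\phi(n)-r}{2}$ edges and leaving the $r$ self-invertible vertices of $V_1$ unmatched. \emph{Stage 2}: by Proposition~\ref{Prop 3} the count $r$ is an even power of two, so split $U'(\mathbb{Z}_n)=A\sqcup B$ with $|A|=|B|=r/2$ and add the edges $(1,u)\sim(\mathfrak{e}_2,u)$ for $u\in A$ and $(1,u)\sim(\mathfrak{e}_3,u)$ for $u\in B$, which are valid because $u\cdot u=1$; this saturates $V_1$. \emph{Stage 3}: inside the block $(V_2,V_3)$, add $(\mathfrak{e}_2,u)\sim(\mathfrak{e}_3,u)$ for every non-self-invertible $u$, and then pair the leftover residues $\{(\mathfrak{e}_2,u):u\in B\}$ with $\{(\mathfrak{e}_3,u):u\in A\}$ via any bijection $B\to A$; every edge here is valid because $\mathfrak{e}_2\mathfrak{e}_3=0$. \emph{Stage 4}: for each remaining complementary block $(V_{2m},V_{2m+1})$ with $m\ge 2$, pair $(\mathfrak{e}_{2m},u)\sim(\mathfrak{e}_{2m+1},u)$ for every $u\in U(\mathbb{Z}_n)$.

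Summing the contributions gives $\frac{\phi(n)-r}{2}+r+(\phi(n)-r)+\frac{r}{2}+(2^{k-1}-2)\phi(n)=\frac{(2^k-1)\phi(n)}{2}$, so every vertex is saturated and the matching is perfect, yielding the claimed value of $\mu$. The subtle point is Stage 2: a naive plan that dumps all $r$ orphans from $V_1$ into a single $V_q$ leaves the complementary partner $V_{q'}$ with $r$ self-invertible vertices that cannot be matched inside $V_{q'}$ (distinct self-invertible units $u,v$ never satisfy $uv=1$, so self-invertible vertices within any one $V_q$ form an independent set), and whose natural rescue routes through $V_1$ or through the now-exhausted $V_q$ are no longer available. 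Splitting the drain symmetrically across the two halves of one chosen complementary block defeats this obstruction, because the $r/2$ residues left on each side of the block can be rematched across it through the complete bipartite structure supplied by $\mathfrak{e}_2\mathfrak{e}_3=0$.
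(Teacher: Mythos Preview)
Your proof is correct. Both you and the paper construct an explicit perfect matching, but the organizing principles differ. The paper works \emph{row by row} (grouping vertices by their unit): for each self-invertible unit $u_j$ the set $\{(e_i,u_j):1\le i\le 2^k-1\}$ is a clique of odd size (since $u_ju_j=1$), so consecutive self-invertible rows $2j-1,2j$ are matched almost entirely within themselves, with a single cross-row edge $(e_{2^k-2},u_{2j-1})\sim(e_{2^k-1},u_{2j})$ absorbing the two leftover vertices; inverse pairs of non-self-invertible units are then matched by a cyclic shift $(e_i,u_{r+2s-1})\sim(e_{i-1},u_{r+2s})$. You instead work \emph{column by column} (grouping by idempotent), dispatching $V_1$ first and then clearing every complementary block $(V_{2m},V_{2m+1})$ by the complete bipartite structure from $\mathfrak{e}_{2m}\mathfrak{e}_{2m+1}=0$. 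Your scheme makes the generic blocks trivial (Stage~4 is one line) at the cost of the careful orphan-splitting argument in Stages~2--3; the paper's scheme avoids special-casing $V_1$ but needs the row-clique observation and a slightly more intricate picture. Both hinge on $r$ being even (Proposition~\ref{Prop 3}), and both assume $k\ge 2$ so that a non-trivial complementary pair $\mathfrak{e}_2,\mathfrak{e}_3$ exists---which is exactly when $Cl_2(\mathbb{Z}_n)$ is connected.
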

		
		\begin{proof}
			Consider $2^k-1$ partition of the vertex set $V(Cl_2(\mathbb{Z}_n))$ as $V(Cl_2(\mathbb{Z}_n))=V_1\sqcup V_2\sqcup...\sqcup V_{2^k-1},$ where $V_{i}=\{v_{ji}=(e_i,u_j)/e_i\in Id(\mathbb{Z}_n)^*, u_j\in U(\mathbb{Z}_n)\}$ as shown in Figure $4$. By Definition \ref{Clean Graph}, we have following adjacency:
			\begin{enumerate}
				\item $(e_i,u_j)\sim (e_k,u_j),$ for $j=1,2,...,r-1,r.$
				\item  $(e_i,u_j)\sim (e_i,u_{j+1}),$ for $j=r+1,r+3,...,\phi(n)-3,\phi(n)-1$.
				\item $(e_i,u_j)\sim (e_k,u_{j+1}) \text{ and } (e_i,u_{j+1})\sim (e_k,u_{j}),$ for $i\neq k$ and $j=r+1,r+3,...,\phi(n)-3,\phi(n)-1$.
				\item $(e_i,u_j)\sim(e_{i+1},u_k)$ for $i=2,4,...,2^k-2.$
			\end{enumerate}
			
			\begin{center}
				\scalebox{0.65}{
					\begin{tikzpicture} \label{Graph 4}
						\draw (-9.3,-16) rectangle (-7.1,8);
						\draw (-8,-16) node[below]{$\mathbf {V_1}$};
						\fill (-8,7) circle (3pt) node[below] {$v_{11}$}; 
						\fill (-8,5) circle (3pt) node[below] {$v_{21}$}; 
						\fill (-8,3) circle (3pt) node[below] {$v_{31}$}; 
						\fill (-8,1) circle (3pt) node[below] {$v_{41}$};
						\fill (-8,0.5) circle (1pt); 
						\fill (-8,0) circle (1pt); 
						\fill (-8,-0.5) circle (1pt);
						\fill (-8,-1) circle (3pt) node[below] {$v_{(r-1)1}$};
						\fill (-8,-3) circle (3pt) node[below] {$v_{r1}$};
						\fill (-8,-5) circle (3pt) node[below] {$v_{(r+1)1}$};
						\fill (-8,-7) circle (3pt) node[below] {$v_{(r+2)1}$};
						\fill (-8,-9) circle (3pt) node[below] {$v_{(r+3)1}$};
						\fill (-8,-11) circle (3pt) node[below] {$v_{(r+4)1}$};
						\fill (-8,-11.5) circle (1pt);
						\fill (-8,-12) circle (1pt);
						\fill (-8,-12.5) circle (1pt);
						\fill (-8,-13) circle (3pt) node[below] {$v_{(\phi(n)-1)1}$};
						\fill (-8,-15) circle (3pt) node[below] {$v_{\phi(n)1}$};
						

						\draw (-6.5,-16) rectangle (-2.6,8);
						\draw (-5.5,-16) node[below]{$\mathbf {V_2}$};
						\fill (-5.5,7) circle (3pt) node[below] {$v_{12}$}; 
						\fill (-5.5,5) circle (3pt) node[below] {$v_{22}$}; 
						\fill (-5.5,3) circle (3pt) node[below] {$v_{32}$}; 
						\fill (-5.5,1) circle (3pt) node[below] {$v_{42}$};
						\fill (-5.5,0.5) circle (1pt); 
						\fill (-5.5,0) circle (1pt); 
						\fill (-5.5,-0.5) circle (1pt);
						\fill (-5.5,-1) circle (3pt) node[below] {$v_{(r-1)2}$};
						\fill (-5.5,-3) circle (3pt) node[below] {$v_{r2}$};
						\fill (-5.5,-5) circle (3pt) node[below] {$v_{(r+1)2}$};
						\fill (-5.5,-7) circle (3pt) node[below] {$v_{(r+2)2}$};
						\fill (-5.5,-9) circle (3pt) node[below] {$v_{(r+3)2}$};
						\fill (-5.5,-11) circle (3pt) node[below] {$v_{(r+4)2}$};
						\fill (-5.5,-11.5) circle (1pt);
						\fill (-5.5,-12) circle (1pt);
						\fill (-5.5,-12.5) circle (1pt);
						\fill (-5.5,-13) circle (3pt) node[below] {$v_{(\phi(pq)-1)2}$};
						\fill (-5.5,-15) circle (3pt) node[below] {$v_{\phi(pq)2}$};

						\draw (-3.6,-16) node[below]{$\mathbf {V_3}$};
						\fill (-3.6,7) circle (3pt) node[below] {$v_{13}$}; 
						\fill (-3.6,5) circle (3pt) node[below] {$v_{23}$}; 
						\fill (-3.6,3) circle (3pt) node[below] {$v_{33}$}; 
						\fill (-3.6,1) circle (3pt) node[below] {$v_{43}$};
						\fill (-3.6,0.5) circle (1pt); 
						\fill (-3.6,0) circle (1pt); 
						\fill (-3.6,-0.5) circle (1pt);
						\fill (-3.6,-1) circle (3pt) node[below] {$v_{(r-1)3}$};
						\fill (-3.6,-3) circle (3pt) node[below] {$v_{r3}$};
						\fill (-3.6,-5) circle (3pt) node[below] {$v_{(r+1)3}$};
						\fill (-3.6,-7) circle (3pt) node[below] {$v_{(r+2)3}$};
						\fill (-3.6,-9) circle (3pt) node[below] {$v_{(r+3)3}$};
						\fill (-3.6,-11) circle (3pt) node[below] {$v_{(r+4)3}$};
						\fill (-3.6,-11.5) circle (1pt);
						\fill (-3.6,-12) circle (1pt);
						\fill (-3.6,-12.5) circle (1pt);
						\fill (-3.6,-13) circle (3pt) node[below] {$v_{(\phi(n)-1)3}$};
						\fill (-3.6,-15) circle (3pt) node[below] {$v_{\phi(n)3}$};
						\draw (-2,-16)  rectangle (1.9,8);
						\draw (-1,-16) node[below]{$\mathbf {V_4}$};
						\fill (-1,7) circle (3pt) node[below] {$v_{14}$}; 
						\fill (-1,5) circle (3pt) node[below] {$v_{24}$}; 
						\fill (-1,3) circle (3pt) node[below] {$v_{34}$}; 
						\fill (-1,1) circle (3pt) node[below] {$v_{44}$};
						\fill (-1,0.5) circle (1pt); 
						\fill (-1,0) circle (1pt); 
						\fill (-1,-0.5) circle (1pt);
						\fill (-1,-1) circle (3pt) node[below] {$v_{(r-1)4}$};
						\fill (-1,-3) circle (3pt) node[below] {$v_{r4}$};
						\fill (-1,-5) circle (3pt) node[below] {$v_{(r+1)4}$};
						\fill (-1,-7) circle (3pt) node[below] {$v_{(r+2)4}$};
						\fill (-1,-9) circle (3pt) node[below] {$v_{(r+3)4}$};
						\fill (-1,-11) circle (3pt) node[below] {$v_{(r+4)4}$};
						\fill (-1,-11.5) circle (1pt);
						\fill (-1,-12) circle (1pt);
						\fill (-1,-12.5) circle (1pt);
						\fill (-1,-13) circle (3pt) node[below] {$v_{(\phi(n)-1)4}$};
						\fill (-1,-15) circle (3pt) node[below] {$v_{\phi(n)4}$};

						\fill  (2.9,-3) circle (1.5pt)(3.2,-3) circle (1.5pt)(3.5,-3)  circle (1.5pt);

						
						\draw(4.6,-16) rectangle (10.4,8);
						\draw (0.9,-16) node[below]{$\mathbf {V_{5}}$};
						\fill (0.9,7) circle (3pt) node[below] {$v_{15}$}; 
						\fill (0.9,5) circle (3pt) node[below] {$v_{25}$};
						\fill (0.9,3) circle (3pt) node[below] {$v_{35}$}; 
						\fill (0.9,1) circle (3pt) node[below] {$v_{45}$};
						\fill (0.9,0.5) circle (1pt); 
						\fill (0.9,0) circle (1pt); 
						\fill (0.9,-0.5) circle (1pt);
						\fill (0.9,-1) circle (3pt) node[below] {$v_{(r-1)5}$};
						\fill (0.9,-3) circle (3pt) node[below] {$v_{r5}$};
						\fill (0.9,-5) circle (3pt) node[below] {$v_{(r+1)5}$};
						\fill (0.9,-7) circle (3pt) node[below] {$v_{(r+2)5}$};
						\fill (0.9,-9) circle (3pt) node[below] {$v_{(r+3)5}$};
						\fill (0.9,-11) circle (3pt) node[below] {$v_{(r+4)5}$};
						\fill (0.9,-11.5) circle (1pt);
						\fill (0.9,-12) circle (1pt);
						\fill (0.9,-12.5) circle (1pt);
						\fill (0.9,-13) circle (3pt) node[below] {$v_{(\phi(n)-1)5}$};
						\fill (0.9,-15) circle (3pt) node[below] {$v_{\phi(n)5}$};
						
						
						\draw (6,-16) node[below]{$\mathbf {V_{(2^k-2)}}$};
						\fill (6,7) circle (3pt) node[below] {$v_{1{(2^k-2)}}$}; 
						\fill (6,5) circle (3pt) node[below] {$v_{2{(2^k-2)}}$};
						\fill (6,3) circle (3pt) node[below] {$v_{3(2^k-2)}$}; 
						\fill (6,1) circle (3pt) node[below] {$v_{4(2^k-2)}$};
						\fill (6,0.5) circle (1pt); 
						\fill (6,0) circle (1pt); 
						\fill (6,-0.5) circle (1pt);
						\fill (6,-1) circle (3pt) node[below] {$v_{(r-1)(2^k-2)}$};
						\fill (6,-3) circle (3pt) node[below] {$v_{r(2^k-2)}$};
						\fill (6,-5) circle (3pt) node[below] {$v_{(r+1)(2^k-2)}$};
						\fill (6,-7) circle (3pt) node[below] {$v_{(r+2)(2^k-2)}$};
						\fill (6,-9) circle (3pt) node[below] {$v_{(r+3)(2^k-2)}$};
						\fill (6,-11) circle (3pt) node[below] {$v_{(r+4)(2^k-2)}$};
						\fill (6,-11.5) circle (1pt);
						\fill (6,-12) circle (1pt);
						\fill (6,-12.5) circle (1pt);
						\fill (6,-13) circle (3pt) node[below] {$v_{(\phi(n)-1)(2^k-2)}$};
						\fill (6,-15) circle (3pt) node[below] {$v_{\phi(n)(2^k-2)}$};
						
						\draw (9,-16) node[below]{$\mathbf {V_{(2^k-1)}}$};
						\fill (9,7) circle (3pt) node[below] {$v_{1{(2^k-1)}}$}; 
						\fill (9,5) circle (3pt) node[below] {$v_{2{(2^k-1)}}$};
						\fill (9,3) circle (3pt) node[below] {$v_{3(2^k-1)}$}; 
						\fill (9,1) circle (3pt) node[below] {$v_{4(2^k-1)}$};
						\fill (9,0.5) circle (1pt); 
						\fill (9,0) circle (1pt); 
						\fill (9,-0.5) circle (1pt);
						\fill (9,-1) circle (3pt) node[below] {$v_{(r-1)(2^k-1)}$};
						\fill (9,-3) circle (3pt) node[below] {$v_{r(2^k-1)}$};
						\fill (9,-5) circle (3pt) node[below] {$v_{(r+1)(2^k-1)}$};
						\fill (9,-7) circle (3pt) node[below] {$v_{(r+2)(2^k-1)}$};
						\fill (9,-9) circle (2pt) node[below] {$v_{(r+3)(2^k-1)}$};
						\fill (9,-11) circle (3pt) node[below] {$v_{(r+4)(2^k-1)}$};
						
						\fill (9,-11.5) circle (1pt);
						\fill (9,-12) circle (1pt);
						\fill (9,-12.5) circle (1pt);
						\fill (9,-13) circle (3pt) node[below] {$v_{(\phi(n)-1)(2^k-1)}$};
						\fill (9,-15) circle (3pt) node[below] {$v_{\phi(n)(2^k-1)}$};
						
						\draw (0,-18) node[below]{\textbf{Figure 4:} Partition of $V(\mathbf{Cl_2(\mathbb{Z}_n)})$};

				\end{tikzpicture}}
			\end{center}
			
			Now, in Figure $5$, we draw a perfect matching only. We can se all the vertices are saturated, see Figure $5$. Thus, $Cl_2(\mathbb{Z}_n)$ has a perfect matching and $\mu(Cl_2(\mathbb{Z}_n))=\frac{\phi(n)(2^k-1)}{2}$

			\begin{center}\scalebox{0.65}{
					
					\begin{tikzpicture}\label{Figure 5}
						
						\draw (-8,-16) node[below]{$V_1$};
						\fill (-8,7) circle (3pt) node[below left] {$v_{11}$}; 
						\fill (-8,5) circle (3pt) node[below left] {$v_{21}$}; 
						\fill (-8,3) circle (3pt) node[below left] {$v_{31}$}; 
						\fill (-8,1) circle (3pt) node[below left] {$v_{41}$};
						\fill (-8,0.5) circle (1pt); 
						\fill (-8,0) circle (1pt); 
						\fill (-8,-0.5) circle (1pt);
						\fill (-8,-1) circle (3pt) node[below left] {$v_{(r-1)1}$};
						\fill (-8,-3) circle (3pt) node[below left] {$v_{r1}$};
						\fill (-8,-5) circle (3pt) node[below left] {$v_{(r+1)1}$};
						\fill (-8,-7) circle (3pt) node[below left] {$v_{(r+2)1}$};
						
						\fill (-8,-9) circle (3pt) node[below left] {$v_{(r+3)1}$};
						\fill (-8,-11) circle (3pt) node[below left] {$v_{(r+4)1}$};
						
						\fill (-8,-11.5) circle (1pt);
						\fill (-8,-12) circle (1pt);
						\fill (-8,-12.5) circle (1pt);
						
						\fill (-8,-13) circle (3pt) node[below left] {$v_{(\phi(n)-1)1}$};
						\fill (-8,-15) circle (3pt) node[below left] {$v_{\phi(n)1}$};
						
						
						\draw (-5.5,-16) node[below]{$V_2$};
						\fill (-5.5,7) circle (3pt) node[below] {$v_{12}$}; 
						\fill (-5.5,5) circle (3pt) node[below] {$v_{22}$}; 
						\fill (-5.5,3) circle (3pt) node[below] {$v_{32}$}; 
						\fill (-5.5,1) circle (3pt) node[below] {$v_{42}$};
						\fill (-5.5,0.5) circle (1pt); 
						\fill (-5.5,0) circle (1pt); 
						\fill (-5.5,-0.5) circle (1pt);
						\fill (-5.5,-1) circle (3pt) node[below] {$v_{(r-1)2}$};
						\fill (-5.5,-3) circle (3pt) node[below] {$v_{r2}$};
						\fill (-5.5,-5) circle (3pt) node[left] {$v_{(r+1)2}$};
						\fill (-5.5,-7) circle (3pt) node[below] {$v_{(r+2)2}$};
						\fill (-5.5,-9) circle (3pt) node[left] {$v_{(r+3)2}$};
						\fill (-5.5,-11) circle (3pt) node[below] {$v_{(r+4)2}$};
						\fill (-5.5,-11.5) circle (1pt);
						\fill (-5.5,-12) circle (1pt);
						\fill (-5.5,-12.5) circle (1pt);
						\fill (-5.5,-13) circle (3pt) node[left] {$v_{(\phi(n)-1)2}$};
						\fill (-5.5,-15) circle (3pt) node[below] {$v_{\phi(n)2}$};
						
						\draw (-2.5,-16) node[below]{$V_3$};
						\fill (-2.5,7) circle (3pt) node[below] {$v_{13}$}; 
						\fill (-2.5,5) circle (3pt) node[below] {$v_{23}$}; 
						\fill (-2.5,3) circle (3pt) node[below] {$v_{33}$}; 
						\fill (-2.5,1) circle (3pt) node[below] {$v_{43}$};
						\fill (-2.5,0.5) circle (1pt); 
						\fill (-2.5,0) circle (1pt); 
						\fill (-2.5,-0.5) circle (1pt);
						\fill (-2.5,-1) circle (3pt) node[below] {$v_{(r-1)3}$};
						\fill (-2.5,-3) circle (3pt) node[below] {$v_{r3}$};
						\fill (-2.5,-5) circle (3pt) node[below right] {$v_{(r+1)3}$};
						\fill (-2.5,-7) circle (3pt) node[below] {$v_{(r+2)3}$};
						\fill (-2.5,-9) circle (3pt) node[below right] {$v_{(r+3)3}$};
						\fill (-2.5,-11) circle (3pt) node[below] {$v_{(r+4)3}$};
						\fill (-2.5,-11.5) circle (1pt);
						\fill (-2.5,-12) circle (1pt);
						\fill (-2.5,-12.5) circle (1pt);
						\fill (-2.5,-13) circle (3pt) node[left] {$v_{(\phi(n)-1)3}$};
						\fill (-2.5,-15) circle (3pt) node[below] {$v_{\phi(n)3}$};
						
						\draw (0,-16) node[below]{$V_4$};
						\fill (0,7) circle (3pt) node[below] {$v_{14}$}; 
						\fill (0,5) circle (3pt) node[below] {$v_{24}$}; 
						\fill (0,3) circle (3pt) node[below] {$v_{34}$}; 
						\fill (0,1) circle (3pt) node[below] {$v_{44}$};
						\fill (0,0.5) circle (1pt); 
						\fill (0,0) circle (1pt); 
						\fill (0,-0.5) circle (1pt);
						\fill (0,-1) circle (3pt) node[below] {$v_{(r-1)4}$};
						\fill (0,-3) circle (3pt) node[below] {$v_{r4}$};
						\fill (0,-5) circle (3pt) node[below right] {$v_{(r+1)4}$};
						\fill (0,-7) circle (3pt) node[below] {$v_{(r+2)4}$};
						\fill (0,-9) circle (3pt) node[below right] {$v_{(r+3)4}$};
						\fill (0,-11) circle (3pt) node[below] {$v_{(r+4)4}$};
						\fill (0,-11.5) circle (1pt);
						\fill (0,-12) circle (1pt);
						\fill (0,-12.5) circle (1pt);
						\fill (0,-13) circle (3pt) node[below right] {$v_{(\phi(n)-1)4}$};
						\fill (0,-15) circle (3pt) node[below] {$v_{\phi(n)4}$};

						\fill  (1,7) circle (1pt)(1.3,7) circle (1pt)(1.6,7)  circle (1pt);
						\fill  (1,5) circle (1pt)(1.3,5) circle (1pt)(1.6,5)  circle (1pt);
						\fill  (1,3) circle (1pt)(1.3,3) circle (1pt)(1.6,3)  circle (1pt);
						\fill  (1,1) circle (1pt)(1.3,1) circle (1pt)(1.6,1)  circle (1pt);
						\fill  (1,-1) circle (1pt)(1.3,-1) circle (1pt)(1.6,-1)  circle (1pt);
						\fill  (1,-3) circle (1pt)(1.3,-3) circle (1pt)(1.6,-3)  circle (1pt);
						\fill  (1,-5) circle (1pt)(1.3,-5) circle (1pt)(1.6,-5)  circle (1pt);
						\fill  (1,-7) circle (1pt)(1.3,-7) circle (1pt)(1.6,-7)  circle (1pt);
						\fill  (1,-9) circle (1pt)(1.3,-9) circle (1pt)(1.6,-9)  circle (1pt);
						\fill  (1,-11) circle (1pt)(1.3,-11) circle (1pt)(1.6,-11)  circle (1pt);
						\fill  (1,-13) circle (1pt)(1.3,-13) circle (1pt)(1.6,-13)  circle (1pt);
						\fill  (1,-15) circle (1pt)(1.3,-15) circle (1pt)(1.6,-15)  circle (1pt);
						
						
						\draw (3,-16) node[below]{$V_{(2^k-3)}$};
						\fill (3,7) circle (3pt) node[below] {$v_{1{(2^k-3)}}$}; 
						\fill (3,5) circle (3pt) node[below] {$v_{2{(2^k-3)}}$};
						\fill (3,3) circle (3pt) node[below] {$v_{3{(2^k-3)}}$}; 
						\fill (3,1) circle (3pt) node[below] {$v_{4{(2^k-3)}}$};
						\fill (3,0.5) circle (1pt); 
						\fill (3,0) circle (1pt); 
						\fill (3,-0.5) circle (1pt);
						\fill (3,-1) circle (3pt) node[below] {$v_{(r-1){(2^k-3)}}$};
						\fill (3,-3) circle (3pt) node[below] {$v_{r{(2^k-3)}}$};
						\fill (3,-5) circle (3pt) node[below right] {$v_{(r+1){(2^k-3)}}$};
						\fill (3,-7) circle (3pt) node[below] {$v_{(r+2){(2^k-3)}}$};
						\fill (3,-9) circle (3pt) node[below right] {$v_{(r+3)(2^k-3)}$};
						\fill (3,-11) circle (3pt) node[below] {$v_{(r+4)(2^k-3)}$};
						\fill (3,-11.5) circle (1pt);
						\fill (3,-12) circle (1pt);
						\fill (3,-12.5) circle (1pt);
						\fill (3,-13) circle (3pt) node[below right] {$v_{(\phi(n)-1)(2^k-3)}$};
						\fill (3,-15) circle (3pt) node[below] {$v_{\phi(n)(2^k-3)}$};
						
						
						\draw (6,-16) node[below]{$V_{(2^k-2)}$};
						\fill (6,7) circle (3pt) node[below] {$v_{1{(2^k-2)}}$}; 
						\fill (6,5) circle (3pt) node[below] {$v_{2{(2^k-2)}}$};
						\fill (6,3) circle (3pt) node[below] {$v_{3(2^k-2)}$}; 
						\fill (6,1) circle (3pt) node[below] {$v_{4(2^k-2)}$};
						\fill (6,0.5) circle (1pt); 
						\fill (6,0) circle (1pt); 
						\fill (6,-0.5) circle (1pt);
						\fill (6,-1) circle (3pt) node[below] {$v_{(r-1)(2^k-2)}$};
						\fill (6,-3) circle (3pt) node[below] {$v_{r(2^k-2)}$};
						\fill (6,-5) circle (3pt) node[below right] {$v_{(r+1)(2^k-2)}$};
						\fill (6,-7) circle (3pt) node[below] {$v_{(r+2)(2^k-2)}$};
						\fill (6,-9) circle (3pt) node[below right] {$v_{(r+3)(2^k-2)}$};
						\fill (6,-11) circle (3pt) node[below] {$v_{(r+4)(2^k-2)}$};
						\fill (6,-11.5) circle (1pt);
						\fill (6,-12) circle (1pt);
						\fill (6,-12.5) circle (1pt);
						\fill (6,-13) circle (3pt) node[below right] {$v_{(\phi(n)-1)(2^k-2)}$};
						\fill (6,-15) circle (3pt) node[below] {$v_{\phi(n)(2^k-2)}$};
						
						\draw (9,-16) node[below]{$V_{(2^k-1)}$};
						\fill (9,7) circle (3pt) node[below] {$v_{1{(2^k-1)}}$}; 
						\fill (9,5) circle (3pt) node[below] {$v_{2{(2^k-1)}}$};
						\fill (9,3) circle (3pt) node[below] {$v_{3(2^k-1)}$}; 
						\fill (9,1) circle (3pt) node[below] {$v_{4(2^k-1)}$};
						\fill (9,0.5) circle (1pt); 
						\fill (9,0) circle (1pt); 
						\fill (9,-0.5) circle (1pt);
						\fill (9,-1) circle (3pt) node[below] {$v_{(r-1)(2^k-1)}$};
						\fill (9,-3) circle (3pt) node[below] {$v_{r(2^k-1)}$};
						\fill (9,-5) circle (3pt) node[below right] {$v_{(r+1)(2^k-1)}$};
						\fill (9,-7) circle (3pt) node[below] {$v_{(r+2)(2^k-1)}$};
						\fill (9,-9) circle (2pt) node[below right] {$v_{(r+3)(2^k-1)}$};
						\fill (9,-11) circle (3pt) node[below] {$v_{(r+4)(2^k-1)}$};
						
						\fill (9,-11.5) circle (1pt);
						\fill (9,-12) circle (1pt);
						\fill (9,-12.5) circle (1pt);
						\fill (9,-13) circle (3pt) node[below right] {$v_{(\phi(n)-1)(2^k-1)}$};
						\fill (9,-15) circle (3pt) node[below] {$v_{\phi(n)(2^k-1)}$};
						\draw [line width=1mm] (-8,7) to [out=18,in=170](9,7); 
						\draw [line width=1mm] (-5.5,7) to (-2.5,7); 
						\draw [line width=1mm](0,7) to (0.5,7); 
						\draw [line width=1mm] (2.5,7) to (3,7); 
						
						\draw [line width=1mm](6,7) to (9,5); 

						\draw [line width=1mm] (-8,5) to (-5.5,5); 
						\draw[line width=1mm] (-2.5,5) to (0,5); 
						
						\draw [line width=1mm](3,5) to (6,5);
						
						\draw [line width=1mm] (-8,3) to [out=18,in=170](9,3); 
						\draw [line width=1mm] (-5.5,3) to (-2.5,3); 
						\draw [line width=1mm](0,3) to (0.5,3); 
						\draw [line width=1mm] (2.5,3) to (3,3); 
						
						\draw [line width=1mm](6,3) to (9,1); 
						
						
						\draw [line width=1mm] (-8,1) to (-5.5,1); 
						\draw[line width=1mm] (-2.5,1) to (0,1); 
						
						\draw [line width=1mm](3,1) to (6,1);

						
						\draw [line width=1mm] (-8,-1) to [out=18,in=170](9,-1); 
						\draw [line width=1mm] (-5.5,-1) to (-2.5,-1); 
						\draw [line width=1mm](0,-1) to (0.5,-1); 
						\draw [line width=1mm] (2.5,-1) to (3,-1); 
						\draw [line width=1mm](6,-1) to (9,-3); 

						
						\draw [line width=1mm] (-8,-3) to (-5.5,-3); 
						\draw[line width=1mm] (-2.5,-3) to (0,-3); 
						
						\draw [line width=1mm](3,-3) to (6,-3);

						\draw [line width=1mm](-8,-5) to (9,-7);

						\draw [line width=1mm](-8,-7) to (-5.5,-5);
						
						\draw [line width=1mm](-5.5,-7) to (-2.5,-5);
						
						\draw [line width=1mm](-2.5,-7) to (0,-5);
						
						\draw [line width=1mm](0,-7) to (0.75,-6.2); \draw [line width=1mm](2.25,-5.8)to (3,-5);
						
						\draw [line width=1mm](3,-7) to (6,-5);
						\draw [line width=1mm](6,-7) to (9,-5);

						\draw [line width=1mm](-8,-9) to (9,-11);

						\draw [line width=1mm](-8,-11) to (-5.5,-9);
						
						\draw [line width=1mm](-5.5,-11) to (-2.5,-9);
						
						\draw [line width=1mm](-2.5,-11) to (0,-9);
						
						\draw [line width=1mm](0,-11) to (0.75,-10.2); \draw [line width=1mm](2.25,-9.8)to (3,-9);
						
						\draw [line width=1mm](3,-11) to (6,-9);
						\draw [line width=1mm](6,-11) to (9,-9);

						\draw [line width=1mm](-8,-13) to (9,-15);

						\draw [line width=1mm](-8,-15) to (-5.5,-13);
						
						\draw [line width=1mm](-5.5,-15) to (-2.5,-13);
						
						\draw [line width=1mm](-2.5,-15) to (0,-13);
						
						\draw [line width=1mm](0,-15) to (0.75,-14.2); \draw [line width=1mm](2.25,-13.8)to (3,-13);
						
						\draw [line width=1mm](3,-15) to (6,-13);
						\draw [line width=1mm](6,-15) to (9,-13);

						\draw (0.5,-17) node {\textbf{Figure 5:} Perfect matching in $\mathbf{Cl_2(\mathbb{Z}_n)}$};
				\end{tikzpicture}}
			\end{center}
		\end{proof}

	\end{document}